 \long\def\@makefntext#1{\parindent 1em\noindent
  \hbox to 1.1em{\hss $^{\@thefnmark}$}#1}
\newtheorem{theorem}{Theorem}[section]
\newtheorem{proposition}[theorem]{Proposition}
\newtheorem{lemma}[theorem]{Lemma}
\newtheorem{claim}[theorem]{Claim}
\newtheorem{proof}{\textmd{\textit{Proof.}}}
\newtheorem{remark}[theorem]{Remark}
\newtheorem{acknowledgement}{\textmd{\textit{Acknowledgements.}}}
\newcommand{\qedd}{\hfill \Box}
\newcommand{\R}{\ensuremath{\mathbb{R}}}
\newcommand{\N}{\ensuremath{\mathbb{N}}}
\def\supp{\mathop{\mathrm{supp}}\nolimits}
\newcommand{\scal}{({\rm C}) }
\newcommand{\lip}[1]{({\rm C}_{#1})}
\title{Isoperimetric profile of radial probability measures on Euclidean spaces}
\author{Asuka Takatsu\thanks{Graduate School of Mathematics, Nagoya University, Nagoya 464-8602,
Japan ({\sf takatsu@math.nagoya-u.ac.jp});
Supported in part by the Grant-in-Aid for Young Scientists (B) 24740042
.}}
\date{\empty}
\begin{document}
\maketitle
\begin{abstract}
We derive the isoperimetric profile of Gaussian type for an absolutely continuous probability measure 
on Euclidean spaces with respect to the Lebesgue measure, whose density is a radial function.
The key is a generalization of the Poincar\'e limit which asserts that 
the $n$-dimensional Gaussian measure is approximated by the projections of the uniform probability measure  on the Euclidean sphere of appropriate radius to the first $n$-coordinates as the dimension diverges to infinity. 
The generalization is done by replacing the projections with certain maps.
\footnote[0]{
{\bf  Mathematics Subject Classification (2010): }60B10, 60E15, 60D05.}
\footnote[0]{
{\bf  keywords:}Isoperimetric profile, Poincar\'e limit. } 
\end{abstract}
\section{Introduction}
The isoperimetric profile of a Borel probability measure $\mu$ on $\R^n$ describes a relation between the volume $\mu[A]$ and the {\it boundary measure} 
$\mu^+[A]:=\varliminf_{\varepsilon \downarrow 0} (\mu[A^\varepsilon]-\mu[A])/ \varepsilon$ of $A \subset \R^n$, 
where $A^\varepsilon := \left\{ x \in \R^n \ |\ \inf_{a\in A} |x-a| < \varepsilon \right\}$ 
denotes the $\varepsilon$-neighborhood of $A$ with respect to the standard Euclidean norm $|\cdot|$. 
Throughout this note, any subset of $\R^n$ is assumed to be Borel.
Precisely, the {\it isoperimetric profile}  $I[\mu]$ of $\mu$ is a function on $[0,1]$ defined  by  
\[
I[\mu](a):=\inf \left\{ \mu^+[A] \ |\ A \subset \R^n \text{ with } \mu[A]=a\right\}. 
\]

Let $A_n$ denote the boundary measure of the unit ball in $\R^n$ with respect to the Lebesgue measure.
For a measurable, nonnegative function $f$ on $(0,\infty)$ satisfying 
\[
 M_n^f:=\frac{1}{A_n}\int_{0}^\infty f(r) r^{n-1} dr <\infty, 
\]
the $n$-dimensional {\it radial probability measure $\mu_n^f$ with density $f$} is the absolutely continuous probability measure on $\R^n$ 
with density
\[
\frac{d \mu_n^f}{dx}(x)=\frac{1}{M_n^f} f(|x|) 
\]
with respect to the $n$-dimensional Lebesgue measure.
For example, the $n$-dimensional Gaussian measure $\gamma_n$ is the radial probability measure with density $g(r):=\exp(-r^2/2)$, 
and its isoperimetric profile was provided by Borell~\cite{Bo} and Sudakov--Tsirel'son~\cite{ST} independently of the form  
\[
I[\gamma_n](a)=I[\gamma_1](a)=G'\left( G^{-1}(a)\right), \quad G(r):=\int_{-\infty}^r (2\pi)^{-1/2}g(s) ds=\gamma_1[(-\infty,r]].
\]
The proof relies on the approximation procedure, so-called Poincar\'e limit: 
let $S_N$ be the $(N-1)$-dimensional Euclidean sphere of radius $N^{1/2} $ and  $v_N$ be the uniform probability measure on $S_N$.
We consider the orthogonal projection from $\R^N$ to the first $n$-coordinates, and denote by $P_{n,N}$ the restriction of it on $S_N$.
Then $\gamma_n$ is obtained as the limit of $(P_{n,N})_{\sharp}v_N$ as $N \to \infty$, 
where  $(P_{n,N})_{\sharp}v_N$ denotes the push-forward measure of $v_N$ by $P_{n,N}$, namely 
$(P_{n,N})_{\sharp}v_N[A]=v_N[P_{n,N}^{-1}(A)]$ for any $A \subset \R^n$.

The aim of this note is to derive the isoperimetric profile of Gaussian type for $\mu_n^f$, 
that is, estimate $I[\mu_n^f]$ below by $I[\gamma_1]$.
To do this,  let us generalize the Poincar\'e limit by replacing $P_{n,N}$ with $P_{n,N}^\rho:= s_n^\rho \circ P_{n,N}$, 
where $s_n^\rho$ is the map on $\R^n$ defined as 
\[
 s_n^\rho(x):=
\begin{cases}
\rho(|x|)x & \text{if } x \neq 0,\\ 
0 & \text{if } x = 0
\end{cases}
\]
for a function $\rho$ on $(0,\infty)$ satisfying the following condition.
\begin{itemize}
\item[(C)]
$\rho$ is a $C^1$, positive function on $(0, \infty)$ and $s_1^{\rho}$ is strictly increasing.  
\end{itemize}
\begin{theorem}\label{poin}
For a function $\rho$ satisfying~$\scal$, 
let $\sigma$ be the inverse function of $s_1^\rho$.
For any $x \in \R^n \setminus\{0\}$,   
$\{f^\rho_{n,N}(x):=d (P_{n,N}^\rho{}_\sharp v_N)(x) /dx\}_{N\in \N}$ converges to 
\[
f^\rho_n(x):=
\begin{cases}
\displaystyle
(2\pi)^{-n/2} \exp\left( -\frac{\sigma(|x|)^2}2\right) \left\{ \frac{\sigma(|x|)}{|x|} \right\}^{n-1} \sigma'(|x|)
 & \text{if } x \in s_n^\rho (\R^n \setminus \{0\}),\\ 
0 & \text{otherwise} 
\end{cases}
\]
as $N \to \infty$.
The function $f_n^\rho$ has unit mass on $\R^n$ with respect to the Lebesgue measure
and hence $\{P_{n,N}^\rho{}_\sharp v_N\}_{N\in \N}$ converges weakly to
the absolutely continuous probability measure $\nu_n^\rho$ on $\R^n$ such that $d\nu_n^\rho/dx=f_n^\rho$ as $N \to \infty$.  
\end{theorem}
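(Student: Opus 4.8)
The plan is to deduce the statement from the classical Poincar\'e limit by a change of variables. Since $P_{n,N}^\rho = s_n^\rho\circ P_{n,N}$, we have $(P_{n,N}^\rho)_\sharp v_N = (s_n^\rho)_\sharp\bigl((P_{n,N})_\sharp v_N\bigr)$, so the whole argument rests on two inputs: an explicit formula for the density of $(P_{n,N})_\sharp v_N$, and the geometry of the fixed map $s_n^\rho$. For the first input, a random point with law $v_N$ has the squared norm of its first $n$ coordinates equal in distribution to $N$ times a $\mathrm{Beta}(n/2,(N-n)/2)$ variable, from which one reads off that $(P_{n,N})_\sharp v_N$ is absolutely continuous with the radial density
\[
p_{n,N}(y)=\frac{\Gamma(N/2)}{\pi^{n/2}N^{n/2}\Gamma((N-n)/2)}\left(1-\frac{|y|^2}{N}\right)^{(N-n-2)/2}\qquad(|y|<N^{1/2}),
\]
and $p_{n,N}(y)=0$ otherwise; this is exactly the computation behind the Poincar\'e limit, and Stirling's formula together with $(1-t/N)^{(N-n-2)/2}\to e^{-t/2}$ gives $p_{n,N}(y)\to(2\pi)^{-n/2}\exp(-|y|^2/2)$ for every fixed $y\in\R^n$.

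Next I would study $s_n^\rho$. Condition~$\scal$ makes $s_1^\rho$ a $C^1$ increasing bijection from $(0,\infty)$ onto an interval $(a,b)$, so $s_n^\rho$ restricts to a bijection of $\R^n\setminus\{0\}$ onto the open annulus $\{w:a<|w|<b\}$, with inverse $\Phi(w)=(\sigma(|w|)/|w|)\,w$; indeed $|s_n^\rho(x)|=s_1^\rho(|x|)$ forces $|x|=\sigma(|w|)$ when $w=s_n^\rho(x)$. A map of the form $z\mapsto h(|z|)z$ has Jacobian determinant $(rh(r))'\,h(r)^{n-1}$ at $z$ with $r=|z|$ (one radial stretch factor $(rh(r))'$ and $n-1$ spherical stretch factors $h(r)$); for $\Phi$ this means $h(r)=\sigma(r)/r$, hence $rh(r)=\sigma(r)$ and $|\det D\Phi(w)|=\sigma'(|w|)(\sigma(|w|)/|w|)^{n-1}$. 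Because $p_{n,N}$ is radial and $|\Phi(w)|=\sigma(|w|)$, the change-of-variables formula for push-forwards of absolutely continuous measures gives, for $w$ in the annulus and $N>\sigma(|w|)^2$,
\[
f_{n,N}^\rho(w)=p_{n,N}(\Phi(w))\,|\det D\Phi(w)|=\frac{\Gamma(N/2)}{\pi^{n/2}N^{n/2}\Gamma((N-n)/2)}\left(1-\frac{\sigma(|w|)^2}{N}\right)^{(N-n-2)/2}\sigma'(|w|)\left(\frac{\sigma(|w|)}{|w|}\right)^{n-1},
\]
while $f_{n,N}^\rho(w)=0$ when $|w|<a$ or $|w|>b$, since $(P_{n,N}^\rho)_\sharp v_N$ is carried by $s_n^\rho(\{0<|x|<N^{1/2}\})$. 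Letting $N\to\infty$ and invoking the limits from the previous paragraph yields $f_{n,N}^\rho(w)\to f_n^\rho(w)$ for every $w\in\R^n\setminus\{0\}$.

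Finally I would identify the limit as a probability density and upgrade the convergence. Substituting $w=s_n^\rho(x)$ with $x\in\R^n\setminus\{0\}$ and using $\sigma(|w|)=|x|$, $\sigma'(|w|)=1/(s_1^\rho)'(|x|)$, $\sigma(|w|)/|w|=1/\rho(|x|)$ and $dw=(s_1^\rho)'(|x|)\,\rho(|x|)^{n-1}\,dx$, all of the $\rho$- and $(s_1^\rho)'$-factors cancel and $f_n^\rho(w)\,dw=(2\pi)^{-n/2}\exp(-|x|^2/2)\,dx$; integrating over $\R^n$ gives $\int_{\R^n}f_n^\rho\,dw=\int_{\R^n}(2\pi)^{-n/2}\exp(-|x|^2/2)\,dx=1$, so $f_n^\rho$ is the density of a probability measure $\nu_n^\rho$ on $\R^n$. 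Since moreover $f_{n,N}^\rho\to f_n^\rho$ Lebesgue-a.e.\ while $\int_{\R^n}f_{n,N}^\rho=1=\int_{\R^n}f_n^\rho$ for every $N$, Scheff\'e's lemma yields $f_{n,N}^\rho\to f_n^\rho$ in $L^1(\R^n)$, whence $(P_{n,N}^\rho)_\sharp v_N\to\nu_n^\rho$ in total variation and, a fortiori, weakly.

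The hard part will be the bookkeeping of the second paragraph: keeping track of the exact normalizing constant in $p_{n,N}$ and its Gaussian limit (the classical Poincar\'e computation), and controlling the behaviour of $s_n^\rho$ near the origin and near the limiting radii $a$ and $b$ --- in particular the fact that for finite $N$ the push-forward lives on a \emph{bounded} annulus, which is why the displayed identity is only asserted for $N>\sigma(|w|)^2$. One should also note a minor point: $\scal$ permits $(s_1^\rho)'$ to vanish on a closed set, where $\sigma$ fails to be differentiable; but the image of that set under $s_1^\rho$ is Lebesgue-null (as $s_1^\rho$ is $C^1$), so $f_n^\rho$ is still well defined a.e.\ and none of the integrals or the weak limit are affected.
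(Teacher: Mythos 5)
Your proof is correct, but it follows a genuinely different route from the paper's. The paper does not quote the classical Poincar\'e limit: it recomputes the density $f^\rho_{n,N}$ from scratch (Lemma~\ref{express}) by applying the Lebesgue differentiation theorem to $v_N[(P_{n,N}^\rho)^{-1}(B_\varepsilon(x))]$ and carrying out a delicate volume computation on the sphere --- parametrizing the fibre over $B_\varepsilon(x)$, reducing to a one-dimensional optimization via Claims~\ref{abs}--\ref{limit}, and identifying the limit $\lim_{\varepsilon\downarrow 0}|U_\varepsilon(x)|_n/(\varepsilon^n V_n)$. You instead factor $(P_{n,N}^\rho)_\sharp v_N=(s_n^\rho)_\sharp\bigl((P_{n,N})_\sharp v_N\bigr)$, take the Beta-distribution formula for the projected sphere measure as known, and obtain the same expression (your Gamma-function constant equals the paper's $A_{N-n}/(N^{n/2}A_N)$) by a one-line Jacobian computation for the fixed radial diffeomorphism $\Phi=(s_n^\rho)^{-1}$; your Jacobian $(rh(r))'h(r)^{n-1}$ agrees with the eigenvalue computation the paper only performs later, in Lemma~\ref{lip1}. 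For the final step the paper exhibits an explicit integrable dominating function and uses dominated convergence, whereas you invoke Scheff\'e's lemma, which is cleaner and in fact delivers total-variation convergence, strictly stronger than the weak convergence asserted. What the paper's longer route buys is self-containedness (the case $\rho\equiv 1$ is derived rather than assumed) and an explicit description of the fibres $(P_{n,N}^\rho)^{-1}(\cdot)$ as subsets of $S_N$, which is reused in the proof of Theorem~\ref{isop}; what yours buys is brevity and modularity. Your closing remark about the critical values of $s_1^\rho$ (where $\sigma'$ fails to exist, a Lebesgue-null set of radii by Sard) is a genuine subtlety that the paper glosses over --- the theorem's pointwise claim at such $x$ is ill-posed as stated for both proofs --- and flagging it is appropriate; it does not affect the unit-mass computation or the weak limit.
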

Theorem~\ref{poin} for the case of $\rho \equiv 1$ recovers the original Poincar\'e limit.
A radial probability measure $\mu_n^f$ is said to be a {\it generalized Poincar\'e limit} 
if there exists a function $\rho$ satisfying~\scal  such that $\mu_n^f=\nu_n^\rho$.
To estimate the isoperimetric profile of $\mu_n^f=\nu_n^\rho$, 
we impose an additional condition on $\rho$.
\begin{itemize}
\item[(C$_n$)]
The map $s_n^\rho$ is Lipschitz continuous.
\end{itemize}
\begin{theorem}\label{isop}
For $m=1$ and $n$, 
let $\mu_m^f$ be the generalized Poincar\'e limit with $\rho_m$ satisfying~$\lip{m}$. 
Then it holds for any $a \in [0,1]$ with $a \neq 1/2$ that    
\[
I[\mu_n^f](a) \geq \frac{1}{L_n} I[\gamma_1](a), 
\]
where $L_n$ is the smallest Lipschitz constant of $s_n^{\rho_n}$.
Moreover, if $\varlimsup_{r \downarrow 0} f(r) \in (0,\infty)$, 
then the above inequality also holds true for $a=1/2$.
\end{theorem}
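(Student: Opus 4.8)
The plan is to imitate the Borell--Sudakov--Tsirel'son derivation of the Gaussian isoperimetric inequality, using Theorem~\ref{poin} with $\rho=\rho_n$ in place of the classical Poincar\'e limit and paying for the nonlinearity of $s_n^{\rho_n}$ with the Lipschitz constant $L_n$. Write $T_N:=P_{n,N}^{\rho_n}=s_n^{\rho_n}\circ P_{n,N}\colon S_N\to\R^n$ and $\mu_{n,N}:=(T_N)_\sharp v_N$, so that $\mu_{n,N}\to\mu_n^f=\nu_n^{\rho_n}$ weakly by Theorem~\ref{poin}. The first point is that $T_N$ is $L_n$-Lipschitz: $P_{n,N}$ is $1$-Lipschitz, being the restriction to $S_N$ of an orthogonal projection, and $s_n^{\rho_n}$ is $L_n$-Lipschitz by the definition of $L_n$ together with~$\lip{n}$; here it is harmless to use the geodesic distance on $S_N$, since it dominates the chordal one. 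Consequently, for any $B\subset S_N$ and $\varepsilon>0$ one has $T_N^{-1}(B^\varepsilon)\supset(T_N^{-1}B)^{\varepsilon/L_n}$, the right-hand neighborhood taken inside $S_N$: if $x$ lies within geodesic distance $\varepsilon/L_n$ of a point $b\in T_N^{-1}B$, then $|T_N(x)-T_N(b)|<\varepsilon$ and $T_N(b)\in B$.

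Fix $a\in[0,1]$ with $a\neq 1/2$ and $A\subset\R^n$ with $\mu_n^f[A]=a$ and $\mu_n^f{}^+[A]<\infty$; the goal is $\mu_n^f{}^+[A]\geq L_n^{-1}I[\gamma_1](a)$. By a standard approximation one may assume $A$ is a continuity set of $\mu_n^f$ (recall $\mu_n^f\ll\mathrm{Leb}$, so any set with Lebesgue-null topological boundary qualifies), so that $a_N:=\mu_{n,N}[A]\to a$. Set $B_N:=T_N^{-1}(A)\subset S_N$, so $v_N[B_N]=a_N$. For $\varepsilon>0$, since $A^{\varepsilon'}$ is open for every $\varepsilon'>0$ and $\overline{A^\varepsilon}\subset A^{\varepsilon'}$ whenever $\varepsilon'>\varepsilon$, the portmanteau theorem and the inclusion above give
\[
\mu_n^f\big[A^{\varepsilon'}\big]\ \geq\ \mu_n^f\big[\overline{A^\varepsilon}\big]\ \geq\ \varlimsup_{N\to\infty}\mu_{n,N}\big[\overline{A^\varepsilon}\big]\ \geq\ \varlimsup_{N\to\infty} v_N\big[T_N^{-1}(A^\varepsilon)\big]\ \geq\ \varlimsup_{N\to\infty} v_N\big[B_N^{\,\varepsilon/L_n}\big].
\]
By L\'evy's isoperimetric inequality on $S_N$ we have $v_N[B_N^{\,\delta}]\geq v_N[C_N^{\,\delta}]$ for every $\delta>0$, with $C_N\subset S_N$ a geodesic cap of measure $a_N$, and the computation at the heart of the classical Poincar\'e limit (the case $\rho\equiv1$) yields, because $a_N\to a$, that $\varliminf_{N}v_N[C_N^{\,\delta}]\geq G(G^{-1}(a)+\delta)$ for each fixed $\delta>0$, where $G$ is the standard Gaussian distribution function of the introduction. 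Taking $\delta=\varepsilon/L_n$ and then letting $\varepsilon'\downarrow\varepsilon$ gives $\mu_n^f[A^{\varepsilon}]\geq G(G^{-1}(a)+\varepsilon/L_n)$ for all $\varepsilon>0$.

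Differentiating at $\varepsilon=0$, and using that $G^{-1}(a)\in\R$ and $G$ is smooth,
\[
\mu_n^f{}^+[A]\ =\ \varliminf_{\varepsilon\downarrow0}\frac{\mu_n^f[A^\varepsilon]-a}{\varepsilon}\ \geq\ \varliminf_{\varepsilon\downarrow0}\frac{G\big(G^{-1}(a)+\varepsilon/L_n\big)-G\big(G^{-1}(a)\big)}{\varepsilon}\ =\ \frac{G'\big(G^{-1}(a)\big)}{L_n}\ =\ \frac{1}{L_n}I[\gamma_1](a),
\]
and taking the infimum over admissible $A$ proves the inequality for every $a\neq1/2$. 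For the endpoint $a=1/2$ the continuity-set approximation is delicate, since $I[\gamma_1]$ attains its maximum there and shifting the measure of $A$ the wrong way would be too expensive; this is precisely where the extra hypothesis $\varlimsup_{r\downarrow0}f(r)\in(0,\infty)$ is used. Combined with $\lip{1}$ on $\rho_1$ (equivalently, with $\mu_1^f$ being a generalized Poincar\'e limit), it forces $f$, hence $f_n^{\rho_n}$, to be bounded away from $0$ and $\infty$ near the origin, so that one can adjust the measure of $A$ by an arbitrarily small modification supported in a tiny ball about $0$ at negligible boundary cost, reducing $a=1/2$ to the already-proved range and invoking the continuity of $I[\gamma_1]$ at $1/2$.

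The main obstacle is the limit passage in the middle: establishing $\varliminf_N v_N[C_N^{\,\delta}]\geq G(G^{-1}(a)+\delta)$ with enough uniformity as $a_N\to a$, which is the quantitative core of the Poincar\'e limit and must either be extracted carefully or imported from the Borell / Sudakov--Tsirel'son argument; secondarily, the bookkeeping that $T_N$ is genuinely $L_n$-Lipschitz for the distance on $S_N$ used in L\'evy's inequality, and the legitimacy of the continuity-set reduction, especially at $a=1/2$.
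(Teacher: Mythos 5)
Your skeleton is the same transfer argument as the paper's: push $A$ back to $S_N$ by $T_N=P_{n,N}^{\rho_n}$, use the $L_n$-Lipschitz bound to convert spherical enlargement into Euclidean enlargement, apply L\'evy's inequality (Proposition~\ref{SBM}), and pass to the limit via Theorem~\ref{poin}. Where you genuinely diverge is in the choice of comparison cap: you compare with a standard cap $\{x_1\le\beta_N\}$ of the \emph{same} measure and invoke the classical ($\rho\equiv1$) Poincar\'e limit to get $G$ directly, whereas the paper compares with the generalized cap $(P_{1,N}^{\rho_1})^{-1}((-\infty,\beta])$ of \emph{strictly smaller} measure $F(\beta)$, $\beta<\alpha$, computes its neighborhood by hand (Claim~\ref{nbd}, which is where $\sigma'=((s_1^{\rho_1})^{-1})'$ and hence the hypothesis $\lip{1}$ on $\rho_1$ enter), and only converts $F$ into $G$ at the very end via $G(\sigma(r))=F(r)$. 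Your route, if completed, would not need the $m=1$ hypothesis at all away from $a=1/2$, which is a real simplification.

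The genuine gap is that the step you yourself flag as ``the main obstacle'' is precisely the content that the paper's Claim~\ref{nbd} supplies, and it does not follow from Theorem~\ref{poin} as stated. Theorem~\ref{poin} gives pointwise density convergence and weak convergence; to conclude $\varliminf_N v_N[C_N^{\delta}]\ge G(G^{-1}(a)+\delta)$ with $v_N[C_N]=a_N\to a$ you additionally need (i) convergence of the quantiles $\beta_N\to G^{-1}(a)$, i.e.\ locally uniform convergence of the distribution functions, and (ii) the spherical geometry computation that the $\delta$-enlargement of $\{x_1\le\beta_N\}$ is $\{x_1\le\beta_N'\}$ with $\beta_N'\to G^{-1}(a)+\delta$. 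Both are true and classical, but neither is written, so the proof has a hole exactly at its quantitative core. A second, smaller defect: the reduction ``one may assume $A$ is a continuity set'' is not legitimate --- modifying $A$ changes both $\mu_n^f[A]$ and $\mu_n^f{}^+[A]$, and a general Borel set of measure $a$ need not be approximable this way. The correct fix (implicitly the paper's, via the strict inequality $F(\beta)<a$) is to run the argument with open fattenings $A^{\eta}$, for which $\varliminf_N\mu_{n,N}[A^{\eta}]\ge\mu_n^f[A^\eta]\ge a$ by the portmanteau theorem, compare with caps of measure bounded below by $a-o(1)$, and let $\eta\downarrow0$ at the end.

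Finally, your treatment of $a=1/2$ is off the mark. Your main argument, once the core estimate is in place, applies verbatim at $a=1/2$ (a cap of measure $\to1/2$ is just an asymptotic hemisphere, and $G$ is smooth at $0$), so no repair by ``modifying $A$ near the origin'' is needed or meaningful. In the paper the hypothesis $\varlimsup_{r\downarrow0}f(r)\in(0,\infty)$ plays an entirely different role: for $a=1/2$ the comparison half-line degenerates to $(-\infty,\alpha]$ with $\alpha=0$, the left endpoint of the domain of $\sigma$, and the hypothesis is what keeps $\sigma'$ bounded there so that $L(\alpha,t)$ in Claim~\ref{nbd} is finite and the differentiation in Claim~\ref{perimet} goes through. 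You should either drop the $a=1/2$ caveat from your version (after verifying the core estimate there) or explain honestly where your argument would use the hypothesis --- as written it uses it nowhere.
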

Theorem~\ref{isop} for the case of $f(r)=\exp(-r^2/2)$ with $\rho_m \equiv 1$ corresponds to the result of the Gaussian measure.

This note is organized as follows.
Section~2 concerns Theorem~\ref{poin} which is a generalization of the Poincar\'e limit.
In Section~3, we prove Theorem~\ref{isop}, namely derive the isoperimetric profile of Gaussian type for a radial probability measure.
Section~4 provides criteria and examples of $\mu_n^f$ which is applicable to Theorems~\ref{poin}, \ref{isop}.

\begin{acknowledgement}
The author would like to express her gratitude to Karl-Theodor Sturm for his hospitality during her stay in Bonn, 
where the author  worked as the JSPS-IH\'ES (EPDI) fellow, 
as well as for his invitation to Winter School on ``New Trends in Stochastic and Geometric Analysis".
She thanks all the participants in the school, especially Martin Huesmann, for their valuable discussions.
She would also like to thank Michel Ledoux, Shin-ichi Ohta and Shouhei Honda for remarkable comments.
\end{acknowledgement}
\section{Generalized Poincar\'e limit}
In this section, we always assume that $\rho$ satisfies~\scal and 
$\sigma$ is the inverse function of $s_1^\rho$.
We moreover define the map $\Sigma$ on $\R^n$ by 
\begin{align*}
\Sigma(x)=
\begin{cases}
\displaystyle
 \frac{\sigma(|x|)}{| x|} x &\text{if\ } x \in s_n^\rho(\R^n \setminus\{0\}), \\
0 &\text{otherwise.} 
\end{cases}
\end{align*}
We then have $\Sigma \circ s_n^\rho (x)=x$ for any $x  \in\R^n$ and 
\[
s_n^\rho(\R^n \setminus\{0\})=s_n^\rho(\R^n) \setminus\{0\}=\{x \in \R^n \ |\ |x| \in s_1^\rho((0,\infty))\}.
\] 
Let $V_n$ denote the volume of the unit ball in $\R^n$ with respect to the Lebesgue measure. 

For any $x \in P_{n,N}^\rho(S_N)\setminus\{0\}$, the Lebesgue differentiation theorem yields    
\begin{equation}\label{den}
f^\rho_{n,N}(x)
=\lim_{\varepsilon \downarrow 0}\frac{1}{\varepsilon^n  V_{n}}\int_{B_\varepsilon(x)} f^\rho_{n,N}(x')dx'
=\lim_{\varepsilon \downarrow 0}\frac{1}{\varepsilon^n  V_{n}} v_N[(P_{n,N}^\rho)^{-1}(B_\varepsilon(x))],
\end{equation}
where $B_\varepsilon(x)$ is the open ball in $\R^n$ with center $x$ and  radius $\varepsilon$. 
We compute the right-hand side in~\eqref{den}.
\begin{lemma}\label{express}
For any $x \in P_{n,N}^{\rho}(S_N)\setminus\{0\}$, we have  
\[
f^\rho_{n,N}(x) 
=\frac{ A_{N-n}}{N^{n/2} A_N} 
\left\{ 1-\frac{\sigma(|x|)^2}{N}\right\}^{(N-n-2)/2} 
\left\{ \frac{\sigma(|x|)}{|x|} \right\}^{n-1}\sigma'(|x|).
\]
\end{lemma}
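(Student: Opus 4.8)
The plan is to evaluate $v_N[(P_{n,N}^\rho)^{-1}(B_\varepsilon(x))]$ explicitly and then take the limit in~\eqref{den}. First I would observe that, since $P_{n,N}^\rho = s_n^\rho \circ P_{n,N}$ and $\Sigma \circ s_n^\rho = \id$, for $x \in P_{n,N}^\rho(S_N)\setminus\{0\}$ we have $(P_{n,N}^\rho)^{-1}(B_\varepsilon(x)) = P_{n,N}^{-1}\bigl(s_n^\rho{}^{-1}(B_\varepsilon(x))\bigr)$, and for small $\varepsilon$ the set $s_n^\rho{}^{-1}(B_\varepsilon(x))$ is a small neighborhood of $\Sigma(x) = \sigma(|x|)x/|x|$ contained in $\R^n\setminus\{0\}$, on which $s_n^\rho$ is a $C^1$ diffeomorphism (by condition~\scal). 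So the first step is the change-of-variables computation: the Jacobian of $s_n^\rho$ at a point $y\neq 0$. Writing $y = r\omega$ with $r = |y|$ and $\omega\in S^{n-1}$, the map $s_n^\rho$ sends $r\mapsto s_1^\rho(r) = \rho(r)r$ along the radial direction and acts as multiplication by $\rho(r)$ on the $(n-1)$-dimensional tangent sphere direction, so $\det Ds_n^\rho(y) = (s_1^\rho)'(r)\,\rho(r)^{n-1}$. Equivalently, the Jacobian of the inverse $\Sigma$ at $x$ with $|x| = s_1^\rho(r)$ is $\sigma'(|x|)\,\{\sigma(|x|)/|x|\}^{n-1}$, using $\rho(r) = |x|/\sigma(|x|)$ and $(s_1^\rho)'(\sigma(|x|)) = 1/\sigma'(|x|)$.

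The second step is to compute $v_N[P_{n,N}^{-1}(B_\delta(\Sigma(x)))]$ for a small ball $B_\delta(y_0)$ with $y_0 = \Sigma(x)$, $|y_0| = \sigma(|x|)$. This is exactly the standard Poincaré-limit computation: the density of $(P_{n,N})_\sharp v_N$ with respect to Lebesgue measure on $\R^n$ is, on the ball of radius $N^{1/2}$, the classical surface-measure slice formula
\[
\frac{d(P_{n,N})_\sharp v_N}{dy}(y) = \frac{A_{N-n}}{N^{n/2}A_N}\left\{1 - \frac{|y|^2}{N}\right\}^{(N-n-2)/2},
\]
obtained by expressing $S_N$ as a sphere bundle over the ball $\{|y|<N^{1/2}\}$: over $y$, the fiber is an $(N-n-1)$-sphere of radius $(N-|y|^2)^{1/2}$, and one tracks the metric distortion of the projection. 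I would either cite this as the known Poincaré-limit density or include the short derivation via the co-area formula for $P_{n,N}$ restricted to $S_N$.

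Finally, I combine the two: in~\eqref{den}, as $\varepsilon\downarrow 0$ the preimage $s_n^\rho{}^{-1}(B_\varepsilon(x))$ shrinks to $y_0 = \Sigma(x)$ with Lebesgue volume asymptotic to $\varepsilon^n V_n \cdot |\det D\Sigma(x)| = \varepsilon^n V_n\,\sigma'(|x|)\{\sigma(|x|)/|x|\}^{n-1}$, while $v_N[P_{n,N}^{-1}(\,\cdot\,)]$ on that set is asymptotically its Lebesgue volume times the Poincaré density evaluated at $y_0$, i.e. times $\tfrac{A_{N-n}}{N^{n/2}A_N}\{1-\sigma(|x|)^2/N\}^{(N-n-2)/2}$. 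Dividing by $\varepsilon^n V_n$ and letting $\varepsilon\downarrow 0$ yields the claimed formula. The main obstacle is organizing the two change-of-variables cleanly — making sure the diffeomorphism $s_n^\rho$ near $y_0$ is well-defined for small $\varepsilon$ (using positivity and $C^1$-ness of $\rho$ and strict monotonicity of $s_1^\rho$ from~\scal), and correctly bookkeeping the radial-versus-spherical split of the Jacobian; once that is in place, the rest is the classical computation plus a limit.
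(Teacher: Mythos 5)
Your argument is correct, and it reaches the formula by a genuinely different route from the paper. You factor $P_{n,N}^\rho=s_n^\rho\circ P_{n,N}$ and split the computation into two independent, standard pieces: the classical slice density of $(P_{n,N})_\sharp v_N$, namely $\frac{A_{N-n}}{N^{n/2}A_N}\{1-|y|^2/N\}^{(N-n-2)/2}$, and the Jacobian of the radial diffeomorphism (one radial eigenvalue $\sigma'(|x|)$ and $n-1$ tangential eigenvalues $\sigma(|x|)/|x|$ for $\Sigma$); evaluating the former at $y_0=\Sigma(x)$ and multiplying gives exactly the claimed identity, and both ingredients check out. The paper instead performs one monolithic computation on $S_N$: it parametrizes $(P_{n,N}^\rho)^{-1}(B_\varepsilon(x))$ in the chart $p_N$, reduces everything to the volume of the set $U_\varepsilon(x)\subset\R^{n-1}\times\R_{\geq 0}$, and proves the key limit~\eqref{vol_lim} via Claims~\ref{abs}--\ref{limit} (rotational symmetry of $r^\pm_\varepsilon$, monotonicity of the extremizers $a^\pm_\varepsilon$, and the limits $\eta_\varepsilon/\varepsilon\to\sigma(|x|)/|x|$, $a^\pm_\varepsilon(t\eta_\varepsilon)/\varepsilon\to\pm(1-t^2)^{1/2}$). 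Your decomposition is shorter, avoids those technical claims, and makes the structural point transparent --- the generalized Poincar\'e limit is the classical one transported by the radial map --- at the price of invoking or re-deriving the classical projection density, which the paper chooses to establish from scratch. Two points you should make explicit in a final write-up: the local-diffeomorphism property of $s_n^\rho$ near $\Sigma(x)$ requires $(s_1^\rho)'(\sigma(|x|))>0$, which condition \scal alone does not force (a strictly increasing $C^1$ map may have critical points); the paper makes the same implicit assumption by writing $\sigma'(|x|)$ in the statement, so this is not a gap relative to the paper, but it deserves a remark. Also note that $s_1^\rho((0,\infty))$ is an open interval, so for $x\in P_{n,N}^\rho(S_N)\setminus\{0\}$ and $\varepsilon$ small the ball $B_\varepsilon(x)$ indeed lies in the image of $s_n^\rho$, which is what makes its preimage a genuine neighborhood of $\Sigma(x)$.
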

\begin{proof}
We prove only the case of $n \geq 2$, however a similar argument works for the case of $n=1$.
By symmetry, we may assume that $x$ lies in the positive first coordinate axis. 

Let us consider the orthogonal projection $p_m$ from $\R^m$ to the last $(m-1)$ coordinates.
We define the functions $r^+_\varepsilon$ and $r^-_\varepsilon$ on $p_n(\Sigma (B_\varepsilon(x)))$ by 
\begin{gather*}
r^+_\varepsilon(y):= \sup \{|x'| \ | \ x'\in B^y_\varepsilon(x) \}, \quad
r^-_\varepsilon(y):= \inf \{|x'| \ | \ x' \in B^y_\varepsilon(x) \}, 
\end{gather*}
where $B_\varepsilon^y(x):= (p_n \circ \Sigma )^{-1}(y) \cap B_\varepsilon (x)$.
Since $p_N$ is a chart on a set containing  
\begin{gather*}
(P_{n,N}^\rho)^{-1}(B_\varepsilon(x))=\{(\Sigma(x'),\xi) \in \R^n \times \R^{N-n}\ | \ |\xi|^2=N-\sigma(|x'|)^2, x'\in B_\varepsilon(x) \} \subset S_N
\end{gather*}
for $\varepsilon>0$ small enough, we directly compute  
\begin{align*}
& v_N[(P_{n,N}^\rho)^{-1}(B_\varepsilon(x))] \\
=& \frac{1}{N^{(N-1)/2}A_N }\int_{p_N((P_{n,N}^\rho)^{-1}(B_\varepsilon(x)))} \left(\frac{N}{N-|u|^2}\right) ^{1/2} du \\
=&\frac{1}{N^{(N-1)/2}A_N } \int_{ p_n (\Sigma ( B_\varepsilon(x) )) } 
    \int_{\{\xi \in \R^{N-n}  \ | \ |\xi|^2=N-\sigma(|x'|)^2, \ x'\in B_\varepsilon^y(x) \}} \left( \frac{N}{N-|y|^2-|\xi|^2}\right)^{1/2} d\xi dy\\
=&\frac{A_{N-n} }{N^{(N-1)/2}A_N } \int_{ p_n (\Sigma ( B_\varepsilon(x) ) } \int_{ \{N-\sigma(r^+_\varepsilon(y))^2\}^{1/2}}^{\{ N-\sigma(r^-_\varepsilon(y))^2 \}^{1/2}} 
\left( \frac{N}{N-|y|^2-s^2} \right)^{1/2} s^{N-n-1}ds dy \\
=& \frac{A_{N-n} }{N^{(N-1)/2}A_N }  \int_{ U_\varepsilon (x)} \left(  \frac{N}{N-|y|^2-s^2} \right)^{1/2} s^{N-n-1} dy ds, 
\end{align*}
where we set 
\[
U_\varepsilon (x):=\left\{\left(p_n(\Sigma(x')),s\right) \in \R^{n-1} \times \R_{\geq 0} \ |\ s^2=N-\sigma(|x'|)^2,\ x'\in B_\varepsilon(x)  \right\}. 
\]
According to the assumption that $x$ lies in the first axis, $U_\varepsilon (x)$ converges to the point $(0, \{N-\sigma(|x|)^2\}^{1/2} ) $ as $\varepsilon \searrow 0$.
If the volume $ |U_\varepsilon (x) |_n$ of $ U_\varepsilon (x) $ with respect to the $n$-dimensional Lebesgue measure satisfies     
\begin{equation}\label{vol_lim}
\lim_{\varepsilon \downarrow 0}\frac{|U_\varepsilon (x) |_n}{\varepsilon^n V_n} =\frac{\sigma(|x|)\sigma'(|x|)}{\{ N-\sigma(|x|)^2\}^{1/2}}\left\{\frac{\sigma(|x|)}{|x|} \right\}^{n-1}, 
\end{equation}
then we find that
\begin{align*}
&f^\rho_{n,N}(x) 
=\frac{ A_{N-n}}{N^{(N-1)/2} A_N}  \lim_{\varepsilon \downarrow 0} \frac{1}{\varepsilon^n V_n}    \int_{ U_\varepsilon (x)} \left(  \frac{N}{N-|y|^2-s^2} \right)^{1/2} s^{N-n-1} dy ds\\ 
=&\frac{ A_{N-n}}{N^{(N-1)/2} A_N} \cdot \frac{\sigma(|x|)\sigma'(|x|)}{\{ N-\sigma(|x|)^2\}^{1/2}}  \left\{\frac{\sigma(|x|)}{|x|} \right\}^{n-1}  \cdot
           \left\{\frac{N}{\sigma(|x|)^2}\right\}^{1/2}   \left\{ N-\sigma(|x|)^2\right\}^{(N-n-1)/2}  \\ 
=&\frac{ A_{N-n}}{N^{n/2} A_N}  
  \left\{ 1-\frac{\sigma(|x|)^2}{N}\right\}^{(N-n-2)/2} \left\{ \frac{\sigma(|x|)}{|x|} \right\}^{n-1} \sigma'(|x|)
\end{align*}
as desired.
To prove~\eqref{vol_lim}, we need  several claims.

\begin{claim}\label{abs}
The functions $r^\pm_\varepsilon (y)$  depend only on $|y|$ not on $y$ itself.
\end{claim}
\begin{proof}
For any $y \in p_n(\Sigma (B_\varepsilon(x))) \setminus\{0\} $,
it turns out that 
\begin{align*}\label{alt}
&B^y_\varepsilon(x) 
\!=\!\left\{
x(a,b):=\left(|x| + a,  \frac{b}{|y|} y \right) \biggm|\! (a, b) \in D_\varepsilon, \  y=p_n(\Sigma(x(a,b)))   
=\frac{\sigma(|x(a,b)|)}{|x(a,b)|}  \frac{b}{|y|} y
\right\},\\ \notag
&D_\varepsilon:=\{(a,b) \in \R^2\ |\ a^2+b^2 < \varepsilon^2,\ b > 0\} .
\end{align*}
This means that $r^+_\varepsilon(y)$ (resp.\ $r^-_\varepsilon(y)$) is equal to the supremum (resp.\ infimum) of 
\[
|x(a,b)|=\left\{ (|x|+a)^2 +b^2\right\}^{1/2} 
\] 
on $D_\varepsilon$ subject to 
\begin{equation*}
|y|=Y(a,b):=
\frac{\sigma(|x(a,b)|)}{|x(a,b)|}b =|p_n(\Sigma(x(a,b)))|,
\end{equation*}
which concludes the proof of the claim. 
$\hfill \diamondsuit$
\end{proof}
We sometimes denote $r^\pm_\varepsilon(y)$ by $r^\pm_\varepsilon(|y|)$ as functions on $[0,\eta_\varepsilon)$, 
where $\eta_\varepsilon=\eta_\varepsilon(x)$ given by 
\begin{gather*}
\eta_\varepsilon:
=\sup\{|y|\ | \ y \in p_n(\Sigma (B_\varepsilon(x) ))  \}
=\sup\left\{Y(a,b) \ | \ (a,b)  \in D_\varepsilon   \right\}.
\end{gather*}
Note that  $\lim_{ \varepsilon \downarrow 0 }\eta_\varepsilon=0$. 
It follows from Claim~\ref{abs} that 
\begin{align}\label{first} 
\lim_{\varepsilon \downarrow 0} \frac{|U_\varepsilon (x)|_n}{\varepsilon^nV_n}   \notag
=&
\lim_{\varepsilon \downarrow 0} \frac{1}{\varepsilon^nV_n} \int_{p_n(\Sigma( B_\varepsilon(x))) } \int_{\{s \geq 0\ |\ N-\sigma(r^+_\varepsilon(y))^2 \leq s^2 \leq N-\sigma(r^-_\varepsilon(y))^2 \}} ds dy \\ \notag
=&\lim_{\varepsilon \downarrow 0} \frac{1}{\varepsilon^nV_n}
\int_{ p_n(\Sigma(  B_\varepsilon(x) ))} \left[ \left\{N-\sigma(r^-_\varepsilon(|y|))^2\right\}^{1/2}-  \left\{N-\sigma(r^+_\varepsilon(|y|))^2\right\}^{1/2} \right]  dy \\ \notag
=&\lim_{\varepsilon \downarrow 0} \frac{A_{n-1}}{\varepsilon^nV_n}
 \int_0^{\eta_\varepsilon}  \left[ \left\{N-\sigma(r^-_\varepsilon(\eta))^2\right\}^{1/2}-  \left\{N-\sigma(r^+_\varepsilon(\eta))^2\right\}^{1/2}  \right]  \eta^{n-2}d\eta \\  
 =&\lim_{\varepsilon \downarrow 0} \frac{A_{n-1} }{V_n} \left( \frac{\eta_\varepsilon}{\varepsilon} \right)^{n-1}  
 \int_0^1  u_\varepsilon(t) dt, 
\end{align}
where, in the last equality, we substitute $\eta=t\eta_\varepsilon $  and set 
\[
u_\varepsilon(t):= \frac{t^{n-2}}\varepsilon\left[ \left\{N-\sigma(r^-_\varepsilon(t\eta_\varepsilon))^2\right\}^{1/2}-  \left\{N-\sigma(r^+_\varepsilon(t\eta_\varepsilon))^2\right\}^{1/2}  \right].
\]
We will  investigate the limits of $\eta_\varepsilon/\varepsilon$ and $u_\varepsilon(t)$ as $\varepsilon \searrow 0$.

It is easy to check that $|x(a,b)|$ does not have extrema on  $D_\varepsilon$ by using Lagrange multipliers, and $y \neq 0$ leads to $b\neq 0$.
In other words, for any $\eta \in (0, \eta_\varepsilon)$,  there exist $a^\pm_\varepsilon(\eta) \in I_\varepsilon:=[-\varepsilon,\varepsilon]$ such that 
\begin{gather} \label{mini}
r^\pm_\varepsilon(\eta)
=\left| x \left( a^\pm_\varepsilon(\eta), \left\{ \varepsilon^2-a^\pm_\varepsilon(\eta)^2 \right\}^{1/2}   \right)\right|, \quad
\eta= Y\left( a^\pm_\varepsilon(\eta), \left\{ \varepsilon^2-a^\pm_\varepsilon(\eta)^2 \right\}^{1/2}   \right) .
\end{gather}
The monotonicity of $|x(a, ( \varepsilon^2-a^2)^{1/2})| $ in $a$ and the definition of $r^\pm_\varepsilon $ imply    
\begin{gather} \label{defa+}
a^+_\varepsilon(\eta)=\max\left\{ a \in I_\varepsilon\ |\ \eta=Y \left(a, (\varepsilon^2-a^2)^{1/2}\right)\right\}, \\ \label{defa-}
a^-_\varepsilon(\eta)=\min\left\{ a \in I_\varepsilon\ |\ \eta=Y\left(a, (\varepsilon^2-a^2)^{1/2}\right)\right\}. 
\end{gather}
Due to the fact $r^\pm_\varepsilon(0)=|x|\pm \varepsilon$,    
$a^\pm_\varepsilon(\eta)$ are extended to $[0,\eta_\varepsilon)$ by putting $a^\pm_\varepsilon(0)=\pm\varepsilon$.
\begin{claim}\label{mono}
The functions $a^\pm_\varepsilon (\eta)$ are monotone and $|a^\pm_\varepsilon (\eta)| \leq \varepsilon$ on $\eta \in (0, \eta_\varepsilon)$.
\end{claim}
\begin{proof}
From the monotonicity  
\[
\frac{\partial}{ \partial b} Y(a,b)
=\frac{\sigma'(|x(a,b)|)}{|x(a,b)|^2} b^2 +\frac{\sigma(|x(a,b)|)}{|x(a,b)|^3} (|x|+a)^2 \geq 0, 
\]
where we use the nonnegativity of $\sigma'$, we deduce  
\begin{equation}\label{max}
\eta_\varepsilon
=\sup\left\{ Y(a,b) \ |\ (a,b) \in D_\varepsilon
\right\}
=\max\left\{ Y\left( a, \left( \varepsilon^2-a^2 \right)^{1/2} \right) \Bigm| a \in I_\varepsilon \right\}.
\end{equation}
Since the function $Y( a, ( \varepsilon^2-a^2 )^{1/2} ) $ is continuous on $a \in I_\varepsilon$ and takes the value $0$ at the boundary, 
the intermediate value theorem yields that, for any $\eta_1,\eta_2 \in(0,\eta_\varepsilon)$ with $ \eta_1 < \eta_2$,  there exist $a^\pm(\eta_i)  \in I_\varepsilon$ for $i=1,2$ such that 
\[
\eta_i=Y\left(a^\pm(\eta_i) , \left\{\varepsilon^2-a^\pm(\eta_i) ^2 \right\}^{1/2}\right), \quad
a^-(\eta_1) <a^-(\eta_2) < a^+(\eta_2)< a^+(\eta_1).
\]
This with~\eqref{defa+}, \eqref{defa-}  leads to   
\[
-\varepsilon=a^-_\varepsilon(0) <a^-_\varepsilon(\eta_1) < a^-_\varepsilon(\eta_2)< a^+_\varepsilon(\eta_2) <a^+_\varepsilon(\eta_1) <a^+_\varepsilon(0)=\varepsilon.
\]
$\hfill\diamondsuit$
\end{proof}
Since Claim~\ref{mono} implies 
\[
r^\pm_\varepsilon(\eta)
=\left| x \left( a^\pm_\varepsilon(\eta), \left\{ \varepsilon^2-a^\pm_\varepsilon(\eta)^2 \right\}^{1/2}   \right)\right| 
=\left\{ |x|^2+\varepsilon^2+2a^\pm_\varepsilon(\eta)|x|\right\}^{1/2} \in [ |x|-\varepsilon, |x|+\varepsilon]
\]
for any $\eta \in (0, \eta_\varepsilon)$, the mean value theorem yields    
\begin{align*}
0 \leq u_\varepsilon(t)
   &\leq \frac{t^{n-2}}\varepsilon
 \left[  \max_{r \in [ |x|-\varepsilon, |x|+\varepsilon]} \frac{-\sigma'(r)\sigma(r)}{ \{ N-\sigma(r)^2\}^{1/2}}  
 \left\{ r^-_\varepsilon(t\eta_\varepsilon)-r^+_\varepsilon(t\eta_\varepsilon)\right\} \right]  \\ 
 &\leq t^{n-2}
 \left[ \max_{r \in [ |x|-\varepsilon, |x|+\varepsilon]} \frac{2 \sigma'(r)\sigma(r)}{ \{ N-\sigma(r)^2\}^{1/2}} \right],
\end{align*}
which ensures that $u_\varepsilon(t)$ is dominated by an integrable function on $t \in (0,1)$.
\begin{claim}\label{limit}
For any $t \in (0,1)$, the limits 
\[
\eta_\sigma=\eta_\sigma(x):=\lim_{\varepsilon \downarrow 0}\frac{\eta_\varepsilon}{\varepsilon}, \quad
a^\pm_\sigma (t):=\lim_{\varepsilon \downarrow 0} \frac{a^\pm_\varepsilon(t\eta_\varepsilon)}{\varepsilon}
\]
exist and satisfy $\eta_\sigma=\sigma(|x|)/|x|,\ a^\pm_\sigma(t)=\pm( 1-t^2)^{1/2}$.
\end{claim}
\begin{proof}
Claim~\ref{mono} leads to 
$ {a}^\pm_2(t):=\varliminf_{\varepsilon \downarrow 0} (a^\pm_\varepsilon(t\eta_\varepsilon)/\varepsilon)^2  \in[0,1] $ 
and combining \eqref{mini} with the fact $|x(a, ( \varepsilon^2-a^2 )^{1/2})| \to  |x| $ for any $a \in I_\varepsilon$ as $\varepsilon\searrow 0$ yields   
\begin{align*}
 \varlimsup_{\varepsilon \downarrow 0} \frac{t \eta_\varepsilon}{\varepsilon}
=\varlimsup_{\varepsilon \downarrow 0} \frac{1}{\varepsilon}Y\left( a^\pm_\varepsilon(t\eta_\varepsilon), \left\{ \varepsilon^2-a^\pm_\varepsilon(t\eta_\varepsilon)^2 \right\}^{1/2} \right) 
=\frac{ \sigma( |x|)}{|x|} \left\{1-\underline{a}^\pm_2(t) \right\}^{1/2} \leq \frac{ \sigma( |x|)}{|x|} 
\end{align*}
for any $t \in (0,1)$.
Letting $t \nearrow 1$, we have $\varlimsup_{\varepsilon \downarrow 0} \eta_\varepsilon/\varepsilon \leq \sigma(|x|)/|x|$.
On the other hand, it holds by~\eqref{max} that 
\[
 \varliminf_{\varepsilon \downarrow 0} \frac{\eta_\varepsilon}{\varepsilon}
\geq  \varliminf_{\varepsilon \downarrow 0} \frac{Y( 0,  \varepsilon) }{\varepsilon}  
=\varliminf_{\varepsilon \downarrow 0} \frac{\sigma ( |x(0,  \varepsilon)|) }{|x(0,  \varepsilon)|}\frac{\varepsilon}{\varepsilon}  
=\frac{\sigma(|x|)}{|x|} ,
\]
meaning $\eta_\sigma =\sigma(|x|)/|x|$ and hence $a^\pm_\sigma(t)^2={a}^\pm_2(t)= 1-t^2$.
If there exists $t_0 \in (0,1)$ such that $a^+_\sigma(t_0) =a^-_\sigma(t_0)$,  then Claim~\ref{mono} with the squeeze lemma implies  
$ a^\pm_\sigma(t) \equiv a^\pm_\sigma (t_0)$ for any $t\in(t_0,1)$, which contradicts $a^\pm_\sigma(t)^2=1-t^2$. 
We thus have $a^\pm_\sigma(t)=\pm( 1-t^2)^{1/2}$. $\hfill \diamondsuit$
\end{proof}
Since $u_\varepsilon(t)$ is dominated by an integrable function on $t \in(0,1)$ and Claim~\ref{limit} implies  
\begin{align*}
 & \lim_{\varepsilon \downarrow 0} u_\varepsilon(t) 
 =\lim_{\varepsilon \downarrow 0} \frac{t^{n-2}}{\varepsilon} \left[\left\{N-\sigma(r^-_\varepsilon(t\eta_\varepsilon))^2\right\}^{1/2}-\left\{N-\sigma(t^+_\varepsilon(s\eta_\varepsilon))^2\right\}^{1/2} \right]   \\ 
&=\frac{\sigma'(|x|)\sigma(|x|) }{\{ N-\sigma(|x|)^2\}^{1/2}}  \lim_{\varepsilon \downarrow 0} \frac{t^{n-2}}{\varepsilon }\left\{r^+_\varepsilon(t\eta_\varepsilon)-r^-_\varepsilon(t\eta_\varepsilon)\right\}\\
&=\frac{\sigma'(|x|)\sigma(|x|) }{\{ N-\sigma(|x|)^2\}^{1/2}}  \lim_{\varepsilon \downarrow 0} \frac{t^{n-2}}{\varepsilon}\! \left[ \left\{|x|^2+\varepsilon^2+2a^+_\varepsilon(t\eta_\varepsilon)|x| \right\}^{1/2}-
\left\{|x|^2+\varepsilon^2+2a^-_\varepsilon(t\eta_\varepsilon)|x|\right\}^{1/2}\right]  \\
&=\frac{2\sigma'(|x|)\sigma(|x|) }{ \{ N-\sigma(|x|)^2\}^{1/2}} t^{n-2}(1-t^2)^{1/2}   
\end{align*}
for any $t \in (0,1)$, Lebesgue's dominated convergence theorem yields    
\begin{align*}
 \lim_{\varepsilon \downarrow 0} \int_0^1 u_\varepsilon(t) dt
= \int_0^{1}  \frac{2\sigma'(|x|)\sigma(|x|)}{ \{N-\sigma(|x|)^2\}^{1/2}} t^{n-2}(1-t^2)^{1/2}  dt 
=  \frac{\sigma'(|x|)\sigma(|x|)}{ \{ N-\sigma(|x|)^2\}^{1/2}} \mathrm{B\!}\left(\frac32, \frac{n-1}2\right), 
\end{align*}
where $\mathrm{B}(\cdot,\cdot)$ is the beta function. 
According to the relation 
${V_n}/{A_{n-1}}=\mathrm{B\!}\left(3/2, (n-1)/2\right)$ 
and~\eqref{first}, we compute 
\begin{align*}
\lim_{\varepsilon \downarrow 0} \frac{|U_\varepsilon (x)|_n}{\varepsilon^nV_n}
&=\frac{A_{n-1}}{ V_n}  \lim_{\varepsilon \downarrow 0}  \left( \frac{\eta_\varepsilon}{\varepsilon} \right)^{n-1} \int_0^1 u_\varepsilon(t)dt  
=\frac{A_{n-1}}{ V_n} \cdot  \eta_\sigma^{n-1} \cdot \frac{\sigma'(|x|)\sigma(|x|)}{ \{ N-\sigma( |x|)^2\}^{1/2}  } \frac{V_n}{A_{n-1}} \\
&=\frac{\sigma(|x|)\sigma'(|x|)}{\{ N-\sigma(|x|^2)\}^{1/2}}\left\{\frac{\sigma(|x|)}{|x|}\right\}^{n-1}, 
\end{align*}
which is~\eqref{vol_lim}. 
This completes the proof of the lemma.
$\qedd$
\end{proof}
Let us now generalize the Poincar\'e limit.
\begin{proof}(Theorem~\ref{poin})
Given any $x \notin s_n^\rho (\R^n)$,  
we find that $x \notin P_{n,N}^\rho(S_N)$ for any $N \in \N$ hence $f_{n,N}^\rho(x)=0=f_n^\rho(x)$.
For any $x\in s_n^\rho(\R^n) \setminus \{0\}$, 
Lemma~\ref{express} with the relation $\lim_{N\to\infty} {A_{N-n}}{N^{n/2}}/ A_N=(2\pi)^{-n/2}$ yields 
$f^\rho_{n,N}(x) \to f^\rho_n(x)$ as $N \to \infty$.
We thus have the pointwise convergence of $f_{n,N}^\rho$ to $f_n^\rho$ on $\R^n\setminus\{0\}$ as $N \to \infty$.

It is easy to check that $f^\rho_n$ has unit mass on $\R^n$ with respect to the Lebesgue measure, 
additionally, for any $R \in \R$ satisfying $\sigma(R)^2 = 2(n+2)$ and any $N\geq 2(n+2)$, we find  
\[
 \left[ 1-\frac{\sigma(|x|)^2}{N}    \right]_+^{(N-n-2)/2}  \leq \mathbf{1}_{B_R(0)} (x)+\exp\left(-\frac{\sigma(|x|)^2}2 \right), 
\]
where $[t]_+:=\max\{t, 0 \}$ for $t \in \R$ and  $\mathbf{1}_{B_R(0)}$ is the  characteristic function  on $B_R(0)$.
This  ensures that $f^\rho_{n,N}$ is dominated by an integrable function and 
hence Lebesgue's dominated convergence theorem yields the weak convergence of $P_{n,N}^\rho{}_\sharp v_N$ to $\nu_n^\rho$ as $N \to \infty$.
$\qedd$
\end{proof}
We give a necessary and sufficient condition for $\mu_n^f$ to be a generalized Poincar\'e limit in terms of $f$.
In what follows, we denote by $\supp(f)$ the support of $f$ and set
\[
r_f:=\inf\{ r\ |\ r \in \supp(f)\}, \quad
R_f:=\sup\{ r\ |\ r \in \supp(f)\}.
\]
\begin{proposition}\label{rad}
A radial  probability measure $\mu_n^f$ is a generalized Poincar\'e limit if and only if 
$\supp(f)$ is connected, on the interior of which $f$ is continuous. 
In the case of $\mu_n^f=\nu_n^\rho$, $\supp(f)$ coincides with the closure of $s_1^\rho ((0,\infty))$. 
\end{proposition}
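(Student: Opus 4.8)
The plan is to reduce the statement to a careful analysis of the formula for $f_n^\rho$ from Theorem~\ref{poin} together with the identity $s_n^\rho(\R^n\setminus\{0\})=\{x\in\R^n\mid |x|\in s_1^\rho((0,\infty))\}$ already recorded at the start of Section~2. I would organize it as two implications plus the final identification of $\supp(f)$.

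\emph{Necessity.} Suppose $\mu_n^f=\nu_n^\rho$ for some $\rho$ satisfying~\scal. Since $s_1^\rho$ is a continuous, strictly increasing function on $(0,\infty)$ by~\scal, its image $s_1^\rho((0,\infty))$ is an open interval, say $(r_*,R_*)$ with $0\le r_*<R_*\le\infty$; its inverse $\sigma$ is $C^1$ on this interval, and $\sigma'>0$ there. Reading off the density, $f_n^\rho(x)$ is supported exactly on the closure of $\{|x|\in (r_*,R_*)\}$ (the radial set where the Gaussian-type factor is positive), so $\supp(f)$ is the connected set $[r_*,R_*]\cap[0,\infty)$; and on its interior $(r_*,R_*)$ the function $r\mapsto (2\pi)^{-n/2}\exp(-\sigma(r)^2/2)(\sigma(r)/r)^{n-1}\sigma'(r)$ is continuous, being a composition of continuous functions with $\sigma'$ continuous and $r>0$. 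This simultaneously proves necessity and gives the last sentence of the proposition, $\supp(f)=\overline{s_1^\rho((0,\infty))}$.

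\emph{Sufficiency.} Conversely, assume $\supp(f)$ is connected with $f$ continuous on its interior $(r_f,R_f)$. The idea is to \emph{construct} $\rho$ by solving an ODE that forces the change-of-variables formula to turn $f$ into the Gaussian factor. Concretely, I want a strictly increasing $C^1$ map $\sigma:(r_f,R_f)\to(a,b)\subseteq(0,\infty)$ with inverse $s_1^\rho$ of the form $s_1^\rho(t)=\rho(t)t$, such that the one-dimensional mass-preservation identity
\[
\frac{1}{M_1^{f}}f(r)\,dr = (2\pi)^{-1/2}\exp\!\left(-\frac{\sigma(r)^2}{2}\right)\sigma'(r)\,dr
\]
holds; equivalently, writing $F(r):=\frac{1}{M_1^f}\int_{r_f}^r f$ for the (radial) distribution function and $G$ for the standard Gaussian distribution function as in the introduction, one sets $\sigma:=G^{-1}\circ \widetilde F$ for the appropriate symmetrized/normalized version of $F$, so that $\sigma$ is automatically strictly increasing and $C^1$ on $(r_f,R_f)$ because $f$ is continuous and positive there and $G^{-1}$ is a diffeomorphism. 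Then $\rho(t):=\sigma^{-1}(t)/t$ is $C^1$ and positive on $(0,\infty)$ (after checking $\sigma^{-1}$ extends appropriately and $s_1^\rho=\sigma^{-1}$ is strictly increasing), so~\scal holds, and by Theorem~\ref{poin} the resulting $\nu_n^\rho$ has density $f_n^\rho$; a direct computation using the one-dimensional identity above, the relation $|x|\,d|x|$ versus the radial disintegration in $\R^n$, and the factor $(\sigma(|x|)/|x|)^{n-1}$ shows $f_n^\rho(x)=\frac{1}{M_n^f}f(|x|)$, i.e. $\nu_n^\rho=\mu_n^f$. The connectedness of $\supp(f)$ is exactly what guarantees $F$ is strictly increasing (hence invertible) on the relevant interval, and continuity of $f$ on the interior is what gives $\sigma\in C^1$.

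\emph{Main obstacle.} The routine part is the change-of-variables bookkeeping in $n$ dimensions; the delicate part is the behavior at the endpoints $r_f$ and $R_f$ and at $0$. One must check that $\rho$ defined via $\sigma^{-1}$ genuinely extends to a $C^1$ positive function on \emph{all} of $(0,\infty)$ with $s_1^\rho$ strictly increasing, distinguishing the cases $r_f=0$ versus $r_f>0$ and $R_f<\infty$ versus $R_f=\infty$ — in particular when $r_f>0$ one needs $s_1^\rho((0,\infty))$ to be $(r_f,R_f)$, which pins down the normalization of $\sigma$, and when $f$ vanishes or blows up at an endpoint one must verify $\sigma'$ still behaves so that $\rho$ stays $C^1$ and positive. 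Handling these endpoint cases cleanly, and confirming that no smoothness is lost where $\supp(f)$ meets $0$, is where the argument requires care rather than computation.
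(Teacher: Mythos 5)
Your overall strategy (read necessity and the support identification off the density formula in Theorem~\ref{poin}; for sufficiency, construct $\sigma$ by matching cumulative distribution functions and set $\rho(t)=\sigma^{-1}(t)/t$) is the same as the paper's, but the sufficiency step contains a genuine error: you define $\sigma$ by the \emph{one-dimensional} mass-matching
\[
\frac{1}{M_1^{f}}f(r) = (2\pi)^{-1/2}\exp\left(-\frac{\sigma(r)^2}{2}\right)\sigma'(r),
\]
and then assert that a change-of-variables computation gives $f_n^\rho(x)=f(|x|)/M_n^f$. It does not. Substituting your identity into the formula for $f_n^\rho$ from Theorem~\ref{poin} yields
\[
f_n^\rho(x)=(2\pi)^{-(n-1)/2}\left\{\frac{\sigma(|x|)}{|x|}\right\}^{n-1}\frac{f(|x|)}{M_1^f},
\]
and the factor $\{\sigma(|x|)/|x|\}^{n-1}$ does not cancel; the right-hand side is proportional to $f(|x|)$ only if $\sigma(r)/r$ is constant, i.e.\ only when $f$ is Gaussian up to scaling. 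The correct construction is to match the $n$-dimensional radial masses, as in~\eqref{def}:
\[
\int_{0}^{\sigma(r)} (2\pi)^{-n/2} \exp\left(-\frac{s^2}2\right) s^{n-1}\, ds
=\frac1{M_n^f} \int_{r_f}^r f(s)s^{n-1}\, ds ;
\]
differentiating this produces exactly $(2\pi)^{-n/2}e^{-\sigma(r)^2/2}\{\sigma(r)/r\}^{n-1}\sigma'(r)=f(r)/M_n^f$, which is the target density. In particular the map $\rho$ realizing $\mu_n^f=\nu_n^\rho$ depends on $n$ --- this is precisely why Theorem~\ref{isop} introduces two different functions $\rho_1$ and $\rho_n$ for $m=1$ and $m=n$ --- and your construction conflates them.

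On the point you single out as the main obstacle: with the $n$-dimensional matching, $\sigma$ maps $(r_f,R_f)$ onto all of $(0,\infty)$, so $s_1^\rho=\sigma^{-1}$ is automatically defined, $C^1$, positive and strictly increasing on the whole of $(0,\infty)$, and $\rho(t)=\sigma^{-1}(t)/t$ satisfies~\scal with no case analysis on $r_f=0$ versus $r_f>0$ or $R_f<\infty$ versus $R_f=\infty$. Connectedness of $\supp(f)$ gives strict monotonicity of the right-hand side of the matching equation, and continuity of $f$ on the interior gives the $C^1$ regularity, exactly as you say. Your necessity argument and the identification $\supp(f)=\overline{s_1^\rho((0,\infty))}$ are fine and coincide with the paper's.
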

\begin{proof}
The ``only if" part and the claim on the support follow immediately from Theorem~\ref{poin}.
To prove the ``if" part, 
let $\sigma$ be the function on $(r_f,R_f)$ solving the equation 
\begin{equation}\label{def}
\int_{0}^{\sigma(r)} (2\pi)^{-n/2} \exp\left(-\frac{s^2}2\right) s^{n-1} ds
=\frac1{M_n^f} \int_{r_f}^r f(s)s^{n-1} ds. 
\end{equation}
Then $\sigma$ is $C^1$, strictly increasing  and $\sigma((r_f,R_f)) =(0,\infty)$, 
which ensures the existence of the function $\rho$ satisfying~\scal 
such that $s_1^\rho \circ \sigma (r)=r$ holds for any $ r \in{(r_f,R_f)}$.
For $f_n^\rho=d\nu_n^\rho/dx$, Theorem~\ref{poin} and differentiating~\eqref{def} yield that 
\[
f_n^\rho(x)
=(2\pi)^{-n/2} \exp\left( -\frac{\sigma(|x|)^2}2\right) \left\{ \frac{\sigma(|x|)}{|x|} \right\}^{n-1} \sigma'(|x|)
=\frac{f(|x|)}{M_n^f} 
\]
for almost every $x \in \supp(\mu_n^f)=\supp(\nu_n^\rho)$, that is, $\mu_n^f=\nu_n^\rho$. 
$\qedd$
\end{proof}

\section{Isoperimetric profile of Gaussian type}
We derive the isoperimetric profile of Gaussian type for the generalized Poincar\'e limit with $\rho$ satisfying~$\lip{n}$ 
by using L\'evy's isoperimetric inequality for Euclidean spheres. 
In analogy with $\R^n$, we denote by $X^\varepsilon$ the $\varepsilon$-neighborhood of $X\subset S_N$ with respect to the spherical distance function $d_{S_N}$.
\begin{proposition}{\rm (L\'evy's isoperimetric inequality~\cite{BM})}\label{SBM} 
For any $X \subset S_N$, take a closed metric ball $B \subset S_N$ with $v_N[B]=v_N[X]$.
Then we have $v_N[X^\varepsilon] \geq v_N[B^\varepsilon]$ for any $\varepsilon>0$. 
\end{proposition}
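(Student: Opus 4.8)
The plan is to prove Proposition~\ref{SBM} by \emph{two-point symmetrization} (polarization), which reduces it to the trivial statement for caps. Fix $\varepsilon>0$ and a point $p_0\in S_N$; the ball $B$ with $v_N[B]=v_N[X]$ may be taken to be the geodesic cap centered at $p_0$, so that $\mathbf 1_B$ is the spherical symmetric decreasing rearrangement of $\mathbf 1_X$ about $p_0$. For a hyperplane $H\subset\R^N$ through the origin with $p_0\in\overline{H^+}$, write $\tau_H$ for the reflection across $H$ and define the polarization
\[
X^H:=\bigl(X\cap\tau_H(X)\bigr)\ \cup\ \bigl((X\cup\tau_H(X))\cap H^+\bigr),
\]
i.e. on each pair $\{p,\tau_H p\}$ one keeps as many points as $X$ does but moves them to the $H^+$ side. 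Since $\tau_H\in O(N)$ preserves both $d_{S_N}$ and $v_N$, the map that is the identity on $X\cap\tau_H(X)$ and sends each point of $X\cap H^-$ whose reflection lies outside $X$ to that reflection is a $v_N$-preserving bijection of $X$ onto $X^H$, so $v_N[X^H]=v_N[X]$.

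The first step is the geometric lemma $(X^H)^\varepsilon\subseteq (X^\varepsilon)^H$. It rests on the elementary observation that if $p,q$ lie in the same closed half-space $\overline{H^\pm}$ then $d_{S_N}(p,q)\le d_{S_N}(p,\tau_H q)$; indeed $\langle p,q\rangle-\langle p,\tau_H q\rangle=2\langle p,u\rangle\langle q,u\rangle\ge 0$ for the appropriate unit normal $u$ of $H$. Given $p\in(X^H)^\varepsilon$, pick $q\in X^H$ with $d_{S_N}(p,q)<\varepsilon$. Using $X^H\subseteq X\cup\tau_H(X)$ together with this distance comparison one checks, case by case according to which of $\overline{H^\pm}$ contain $p$ and $q$, that the reflection pair of $p$ meets $X^\varepsilon$ in exactly the way required for $p\in(X^\varepsilon)^H$ (for instance, when $p\in H^-$ one must produce points of $X$ within $\varepsilon$ of both $p$ and $\tau_H p$, and the comparison inequality supplies the missing one). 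Hence
\[
v_N[(X^H)^\varepsilon]\le v_N[(X^\varepsilon)^H]=v_N[X^\varepsilon],
\]
so polarization never increases the $\varepsilon$-enlargement while preserving volume.

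The second step is iteration and a limit. Choose a suitable sequence of hyperplanes $H_1,H_2,\dots$ through the origin with $p_0\in\overline{H_k^+}$ and set $X_0:=X$, $X_k:=X_{k-1}^{H_k}$; by the standard convergence theory for iterated polarizations one has $\mathbf 1_{X_k}\to\mathbf 1_B$ in $L^1(S_N)$, while $v_N[X_k^\varepsilon]$ is nonincreasing in $k$ by the first step. Now fix $p\in B^\varepsilon$. Since $B=\overline{\operatorname{int}B}$, the distance from $p$ to $\operatorname{int}B$ equals the distance from $p$ to $B$, so there is an interior point $q\in B$ with $r:=\varepsilon-d_{S_N}(p,q)>0$; the metric ball $\{p'\in S_N\ |\ d_{S_N}(p',q)<r\}$ then meets $B$ in positive $v_N$-measure, and because $\mathbf 1_{X_k}\to\mathbf 1_B$ in $L^1$ it meets $X_k$ for all large $k$, yielding $q_k\in X_k$ with $d_{S_N}(p,q_k)<d_{S_N}(p,q)+r=\varepsilon$, i.e. $p\in X_k^\varepsilon$. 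Thus $\mathbf 1_{X_k^\varepsilon}\to 1$ pointwise on $B^\varepsilon$, and Fatou's lemma gives $v_N[B^\varepsilon]\le\liminf_k v_N[X_k^\varepsilon]\le v_N[X^\varepsilon]$, as claimed.

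The main obstacle is the convergence assertion in the second step: that an appropriately chosen sequence of polarizations drives an arbitrary measurable $X$ to the cap of equal volume about $p_0$ in $L^1$. The distance comparison and the volume-invariance are routine, and the lemma $(X^H)^\varepsilon\subseteq(X^\varepsilon)^H$ is a finite case analysis, but the $L^1$-convergence of iterated two-point symmetrizations to the symmetric decreasing rearrangement must be either imported from the symmetrization literature or constructed by hand (for instance by polarizing repeatedly along hyperplanes chosen so as to reduce $v_N[X_k\triangle B]$, together with a compactness argument). A heavier alternative would be to prove existence and interior regularity of an isoperimetric region in $S_N$ and then identify it as a cap by a moving-planes or spherical-symmetrization argument; the polarization route is preferable here since it is elementary and avoids regularity theory.
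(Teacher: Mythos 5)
The paper does not prove Proposition~\ref{SBM} at all: it is imported verbatim from L\'evy~\cite{BM} as a classical fact, so there is no ``paper proof'' to match yours against. What you propose is the standard modern elementary route via two-point symmetrization (Baernstein--Taylor, Benyamini, Brock--Solynin, Van Schaftingen), and its load-bearing steps check out: the identity $\langle p,q\rangle-\langle p,\tau_H q\rangle=2\langle p,u\rangle\langle q,u\rangle$ does give the distance comparison on the sphere (geodesic distance is decreasing in the inner product), the pairwise bookkeeping shows $v_N[X^H]=v_N[X]$, and the case analysis for $(X^H)^\varepsilon\subseteq (X^\varepsilon)^H$ closes in every case (the only cases needing the comparison inequality are $p\in H^-$ with $q\in H^+$, where the reflected witness $\tau q$ or $q$ is moved to the correct side without increasing distance). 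The limit argument via Fatou is also sound, modulo the degenerate case $v_N[X]=0$ (where $B$ is a point, not the closure of its interior; that case should be handled directly by noting $X^\varepsilon$ contains a ball of radius $\varepsilon$ whenever $X\neq\emptyset$). The one genuinely substantive ingredient you leave unproved --- that a suitable sequence of polarizations drives $\mathbf 1_{X_k}$ to $\mathbf 1_B$ in $L^1$ --- is a real theorem, not a routine verification; it must either be cited (e.g.\ the universal approximation of the spherical rearrangement by polarizations) or built by the $v_N[X_k\triangle B]$-minimization-plus-compactness argument you sketch, which itself takes a page or two. Since the paper is content to cite L\'evy for the whole proposition, citing the polarization convergence theorem is a legitimate stopping point; just be aware that the alternative classical proofs (L\'evy--Schmidt via induction on dimension, or the Figiel--Lindenstrauss--Milman two-point symmetrization on finite nets) trade this convergence theorem for other technical work rather than avoiding it.
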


\begin{proof}(Theorem~\ref{isop})
Let $\sigma$ be the inverse function of $s_1^{\rho_1}$ and 
$F(r):=\mu_1^f[(-\infty, r]]$ be the {\it cumulative distribution function} of $\mu_1^f$, 
which is differentiable on $(r_f,R_f)$.
Since $\rho_1$ satisfies $\lip{1}$, 
we find $r_f=0$ and $\inf_{r \in {(0, R_f)}} \sigma'(r)>0$ (see~Lemma~\ref{lip1} below).
Moreover, by~\eqref{def}, it holds for any $r \in (0, R_f)$ that 
$G(\sigma(r))=F(r)$ and 
\begin{equation}\label{dist-rel}
G'(\sigma(r))=\frac{1}{\sigma'(r)}F'(r).
\end{equation}

The claim is trivial for the case of $a=0$ and $1$ since the right-hand side is equal to $0$.
We first consider the case of $a \in (1/2,1)$.
For any $A \subset \R^n$ with $\mu_n^f[A] \in(1/2,1)$, there exists a unique $\alpha \in (0,R_f)$ such that 
$\mu_n^f[A]=\mu_1^f[(-\infty,\alpha]]=F(\alpha)$.
Fix $\beta \in (0, \alpha)$.
\begin{claim}\label{nbd}
For any $t \in (0, (R_f-\alpha)/2)$, there exists $N_0 \in \N$, independent of $\beta$, such that 
\[
\left( (P_{1,N}^{\rho_1})^{-1} ( (-\infty, \beta]) \right)^{L(\beta, t)}
\supset (P_{1,N}^{\rho_1})^{-1}(( -\infty, \beta + t ] ), 
\quad
L(\beta, t):=\sup_{r \in [\beta, \beta+t]} \sigma'(r) t+ t^2
\]
holds for any $N \geq N_0$.
\end{claim}
\begin{proof}
We first remark that $L(\beta, t)$ is positive finite by $[\beta, \beta +t] \subset (0, R_f)$.
Without loss of generality, we may assume that $N \in \N$ satisfies $\alpha+t <s_1^{\rho_1}(N^{1/2})$, 
which ensures the unique existence of $x_0 \in (0, N^{1/2})$ such that $s_1^{\rho_1}(x_0)=\beta$.
Moreover, for any point $(x,\xi) \in (P_{1,N}^{\rho_1})^{-1}((-\infty, \beta+t])$ with 
$P_{1,N}^{\rho}=s_1^{\rho_1}(x) \in(\beta, \beta+t]$, we find $\xi \neq 0$.
This implies that, for $\xi_0:=\left(N-x_0^2\right)^{1/2}\xi/|\xi|$,  
it holds that $(x_0, \xi_0) \in (P_{1,N}^{\rho_1})^{-1}((-\infty, \beta])$ and 
\[
|(x,\xi)-(x_0,\xi_0)|^2=2N\left\{\! 1-\cos \left( \frac{d_{S_N}((x,\xi), (x_0,\xi_0))}{N^{1/2}} \right)\!\right\}
=d_{S_N}((x,\xi), (x_0,\xi_0))^2+O(N^{-1}).
\]
Since the direct computation provides   
\begin{align*}
|(x,\xi)-(x_0,\xi_0)|^2
&=2N\left\{ 1-\frac{xx_0}{N}-\left(1-\frac{x^2}{N}\right)^{1/2}\left(1-\frac{x_0^2}{N}\right)^{1/2} \right\} 
=\frac{|x-x_0|^2}{2N}+O(N^{-1}) 
\end{align*}
for any $t>0$, there exists $N_0\in \N$ such that
if $ N \geq N_0$, then we have 
\begin{align*}
d_{S_N}((x,\xi), (x_0,\xi_0))
&\leq |x-x_0|+ t^2
=|\sigma(s_1^\rho(x))-\sigma(s_1^\rho(x_0))| + t^2 \\
&\leq \left( \sup_{r \in [s_1^\rho(x_0), s_1^\rho(x)] }\sigma'(r) \right)|s_1^\rho(x) -s_1^\rho(x_0)| + t^2 
\leq L(\beta, t),
\end{align*}
which concludes the proof of the claim. 
$\hfill \diamondsuit$
\end{proof}
For $N \in\N$ large enough, 
we deduce from Theorem~\ref{poin} that 
\begin{equation*}
  v_N[(P_{n,N}^{\rho_n})^{-1}(A)]=(P_{n,N}^{\rho_n})_\sharp v_N [A] 
> (P_{1,N}^{\rho_1})_\sharp v_N[(-\infty, \beta]]=v_N[(P_{1,N}^{\rho_1})^{-1}((-\infty, \beta])].
\end{equation*}
The Lipschitz continuity of $P_{n,N}^{\rho_n}$ with Lipschitz constant $L_n$ deduced from $\lip{n}$ provides   
\[
v_N\left[ (P_{n,N}^{\rho_n})^{-1}\left(A^{L_n L(\beta,t) }\right) \right]
\geq
v_N\left[\left( (P_{n,N}^{\rho_n})^{-1}(A) \right)^{L(\beta,t)} \right]
\geq  
v_1 \left[ (P_{1,N}^{\rho_1})^{-1}((-\infty, \beta+t])\right],
\] 
where the last inequality follows from the fact that $(P_{1,N}^{\rho_1})^{-1}( (-\infty, \beta] )$ is a closed metric ball 
with Proposition~\ref{SBM} and Claim~\ref{nbd}.
Letting $N \to \infty$ in the above inequality together with Theorem~\ref{poin} implies  
$\nu_n^{\rho_n} \left[ A^{L_n L(\beta,t)  }\right] \geq \nu_1^{\rho_1} [(-\infty, \beta+t]]$
for any $t \in (0, (R_f-\alpha)/2)$.
Since $\beta<\alpha$ is arbitrary and $L(\beta,t)$ is continuous in $\beta$, this also holds for $\alpha$, namely 
\begin{equation}\label{bdd}
 \mu_n^f\left[ A^{L_n L(\alpha,t)  }\right]=\nu_n^{\rho_n} \left[ A^{L_n L(\alpha,t)  }\right]
\geq 
\nu_1^{\rho_1} \left[(-\infty, \alpha+t] \right]=F(\alpha+t).
\end{equation}
\begin{claim}\label{perimet}
$\mu_n^f{}^+[A] \geq I[\gamma_1](\mu_n^f[A])/L_n$.
\end{claim}
\begin{proof}
For $\varepsilon>0$ small enough, 
the continuity and monotonicity of $L(\alpha,t)$ in $t>0$ guarantees 
the existence of $t(\varepsilon)>0$ such that $L_nL(\alpha,t(\varepsilon))=\varepsilon$ 
and 
\[
\lim_{\varepsilon \downarrow 0} \frac{t(\varepsilon)}{\varepsilon}=
\lim_{t \downarrow 0} \frac{t}{L_nL(\alpha,t)}=\frac1{\sigma'(\alpha)L_n}.
\]
We then have 
\begin{align*}
\mu_n^f{}^+[ A^\varepsilon]
&=\varliminf_{\varepsilon \downarrow 0} \frac{\mu_n^f[A^\varepsilon]-\mu_n^f[A] }{\varepsilon }
\geq \varliminf_{\varepsilon \downarrow 0} \frac{ F(\alpha +t(\varepsilon))-F(\alpha)} {\varepsilon }
=\frac{F'(\alpha )}{\sigma'(\alpha)L_n} 
=\frac{I[\gamma_1](\mu_n^f[A])}{L_n}, 
\end{align*}
where the last equality follows from $G(\sigma(\alpha))=F(\alpha)=\mu_n^f[A]$, that is, $\sigma(\alpha)=G^{-1}(\mu_n^f[A])$ and~\eqref{dist-rel}.
$\hfill \diamondsuit$
\end{proof}
The arbitrary choice of $A\subset \R^n$ with $\mu_n^f[A] \in (1/2,1)$ implies $I[\mu_n^f](a) \geq I[\gamma_1](a)/L_n$ for $a\in(1/2,1)$.
Since the similar argument works for the case of $a\in(0,1/2)$ and moreover 
$a=1/2$ which is equivalent to $\alpha=0$ if $\varlimsup_{r \downarrow 0} \sigma'(r)/(2\pi)^{1/2} = \varlimsup_{r \downarrow 0} f(r)/M_1^f \in (0, \infty)$,  
we have the desired result.
$\qedd$
\end{proof}
\begin{remark}
In the case of $f(r)=\exp(-r^2/2)$, Theorem~\ref{isop} corresponds to the case of finite dimensional Gaussian measures in~\cite{Bo,ST}, 
where the case of an infinite dimensional Gaussian measure $\gamma_\infty$ was also proved.
However we may not expect to extend Theorem~\ref{isop} for infinite dimensional cases   
since $I[\gamma_\infty]$ is obtained through the fact that $\{\gamma_n\}_{n \in \N}$ is a cylinder set measure but $\{\mu_n^f\}_{n \in \N}$ is generally not a cylinder set measure.
\end{remark}

\section{Condition and Example}\label{ex}
In order to provide criteria for $\mu_n^f=\nu_n^\rho$ such that $\rho$ satisfies~$\lip{n}$ 
in terms of $f$, we first prepare the following lemma. 
\begin{lemma}~\label{lip1}
For a function $\rho$ satisfying~$\scal$ and $L>0$, the following~\eqref{L1}, \eqref{L2} and~\eqref{L3} are
equivalent to each other.
\begin{enumerate}[{\rm (i)}]
\item\label{L1} $\rho$ satisfies $\lip{1}$ and the smallest Lipschitz constant of $s_1^\rho$ is $L$.
\item\label{L2} $\rho$ satisfies $\lip{n}$ and the smallest Lipschitz constant of $s_n^\rho$ is $L$ for any $n \in \N$. 
\item\label{L3} $\lim_{r \downarrow 0} s^\rho_1(r)=s_1^\rho(0)=0$ and  $\sup_{r>0} (s_1^\rho)'(r) =  L$.
\end{enumerate}
\end{lemma}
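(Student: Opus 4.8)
The plan is to prove the chain of equivalences \eqref{L1} $\Leftrightarrow$ \eqref{L3} $\Leftrightarrow$ \eqref{L2}, reducing everything to elementary one-variable analysis of the scalar profile $s_1^\rho$ and then lifting the one-dimensional Lipschitz bound to $\R^n$ by a rotational symmetry argument. Throughout, write $\phi:=s_1^\rho$, so $\phi(r)=\rho(r)r$ on $(0,\infty)$, $\phi$ is $C^1$ and strictly increasing by $\scal$, and $s_n^\rho(x)=\phi(|x|)x/|x|$ on $\R^n\setminus\{0\}$.

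First I would treat \eqref{L1} $\Leftrightarrow$ \eqref{L3}. On $(0,\infty)$, a $C^1$ strictly increasing function $\phi$ is Lipschitz with smallest constant $L$ \emph{and} extends to a Lipschitz function on $[0,\infty)$ precisely when $\sup_{r>0}\phi'(r)=L<\infty$ and the extension value at $0$ is $\lim_{r\downarrow 0}\phi(r)$; but $s_1^\rho$ is by definition already the map on all of $\R$ with $s_1^\rho(0)=0$, so Lipschitz continuity of $s_1^\rho$ forces $\lim_{r\downarrow 0}\phi(r)=0$. Conversely, if $\lim_{r\downarrow 0}\phi(r)=0$ and $\sup_{r>0}\phi'(r)=L$, then $\phi$ extends continuously by $0$ at $0$, is odd (since $s_1^\rho(x)$ for $x<0$ is $\rho(|x|)x=-\phi(|x|)$), hence $s_1^\rho$ is $L$-Lipschitz on $\R$ by the mean value theorem on each half-line and a short check across $0$ using $|s_1^\rho(r)-s_1^\rho(-r')|=\phi(r)+\phi(r')\le L(r+r')$. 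That the constant cannot be improved below $\sup\phi'$ is immediate from $\phi'(r)=\lim_{h\to0}(\phi(r+h)-\phi(r))/h$. This direction is routine; the only mild subtlety is handling the sign/odd-extension behaviour at the origin, which I would state cleanly once.

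Next, \eqref{L3} $\Rightarrow$ \eqref{L2} is the geometric heart. Assume $\phi$ extends to $[0,\infty)$ with $\phi(0)=0$ and $\sup\phi'=L$. I want to show $\|s_n^\rho(x)-s_n^\rho(y)\|\le L\|x-y\|$ for all $x,y\in\R^n$, with $L$ sharp. Sharpness is easy: restrict $s_n^\rho$ to a ray through $0$, on which it acts as $\phi$, giving a lower bound $L$ on any Lipschitz constant. For the upper bound, by rotational symmetry of $s_n^\rho$ I may assume $x,y$ lie in a common $2$-plane, reducing to $n=2$; writing $x=r_1\omega_1$, $y=r_2\omega_2$ with $\omega_i$ unit vectors and $r_i=|x|,|y|\ge0$, I would parametrize the chord and differentiate, or more cleanly bound $\|s_2^\rho(x)-s_2^\rho(y)\|^2=\phi(r_1)^2+\phi(r_2)^2-2\phi(r_1)\phi(r_2)\cos\theta$ against $L^2\big(r_1^2+r_2^2-2r_1r_2\cos\theta\big)$ where $\theta$ is the angle between $\omega_1,\omega_2$. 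The decisive inequality is that for fixed $r_1,r_2$ the difference of the two sides is monotone in $\cos\theta$ (its $\cos\theta$-derivative is $2\phi(r_1)\phi(r_2)-2L^2r_1r_2\le 0$ since $\phi(r)\le Lr$ by $\phi(0)=0$ and $\phi'\le L$), so it suffices to check $\cos\theta=1$, i.e.\ $|\phi(r_1)-\phi(r_2)|\le L|r_1-r_2|$, which is the mean value theorem. This handles $x,y\ne0$; the case $y=0$ is $\phi(r_1)\le Lr_1$, already noted. Finally \eqref{L2} $\Rightarrow$ \eqref{L1} is trivial by restricting $s_n^\rho$ to a coordinate line (or just taking $n=1$).

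I expect the main obstacle to be the $n=2$ chord estimate in \eqref{L3} $\Rightarrow$ \eqref{L2}: one must be careful that $s_n^\rho$ need not be differentiable at $0$, so a pure Jacobian/mean-value argument on $\R^n$ does not directly apply, and the reduction to a $2$-plane plus the monotonicity-in-$\cos\theta$ trick is what makes it work cleanly. Everything else—the odd extension across $0$ in \eqref{L1} $\Leftrightarrow$ \eqref{L3}, and \eqref{L2} $\Rightarrow$ \eqref{L1}—is straightforward one-variable calculus, and the sharpness of the constant $L$ in every item comes uniformly from restricting to a ray and invoking $\phi'(r)=\sup$ attained in the limit.
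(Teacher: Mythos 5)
Your proof is correct, and for the key implication \eqref{L3}$\Rightarrow$\eqref{L2} it takes a genuinely different route from the paper. The paper also runs \eqref{L2}$\Rightarrow$\eqref{L1}$\Rightarrow$\eqref{L3}$\Rightarrow$\eqref{L2} and proves \eqref{L1}$\Leftrightarrow$\eqref{L3} just as you do, but for \eqref{L3}$\Rightarrow$\eqref{L2} it computes the Jacobi matrix $J^x_{ij}=\rho'(|x|)x^ix^j/|x|+\rho(|x|)\delta_{ij}$ of $s_n^\rho$ at $x\neq 0$, diagonalizes it (eigenvalue $(s_1^\rho)'(|x|)$ in the radial direction, $\rho(|x|)$ with multiplicity $n-1$ tangentially), and identifies the Lipschitz constant with $\sup_r\max\{\rho(r),(s_1^\rho)'(r)\}=L$, using the same mean-value-theorem step as you (your $\phi(r)\le Lr$ is exactly the paper's $\rho(r)\le L$). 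You instead reduce to a $2$-plane through the origin by rotational equivariance and prove the chord inequality $\phi(r_1)^2+\phi(r_2)^2-2\phi(r_1)\phi(r_2)\cos\theta\le L^2(r_1^2+r_2^2-2r_1r_2\cos\theta)$ by monotonicity in $\cos\theta$, with the worst case $\cos\theta=1$ handled by the one-dimensional mean value theorem. Your approach has the advantage of being a direct global estimate: it never needs to pass from a pointwise bound on the differential to a Lipschitz bound, and it handles the origin (where $s_n^\rho$ need not be differentiable) explicitly, a point the paper's Jacobian argument glosses over. The paper's approach is shorter and makes the two principal stretches, tangential $\rho$ and radial $(s_1^\rho)'$, conceptually visible; the sharpness of $L$ comes out the same way in both (restriction to a ray).
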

\begin{proof}
Since~\eqref{L2}$\Rightarrow$\eqref{L1} is trivial, we show \eqref{L1}$\Rightarrow$\eqref{L3}$\Rightarrow$\eqref{L2}.
Note that,  for any $r \in \R \setminus\{0\}$, 
$(s_1^\rho)'(r)=\rho'(|r|)|r|+\rho(|r|)= (s_1^\rho)'(|r|)$
holds.

If~\eqref{L1}, then we have 
$\lim_{r \downarrow 0} s^\rho_1(r)=s_1^\rho(0)=0$ by continuity of $s_1^\rho$ and 
$\sup_{r>0} (s_1^\rho)'(r) =  L$ by the differentiability with the Lipschitz continuity of $s_1^\rho$.
We thus find~\eqref{L1}$\Rightarrow$\eqref{L3}.

Assume~\eqref{L3}.
Since the mean value theorem yields $s_1^{\rho}(r)-s_1^{\rho}(\varepsilon) \leq L(r-\varepsilon)$ for any $r \geq \varepsilon>0$,  
letting $\varepsilon \searrow 0$ and dividing by $r$ provide $\sup_{r>0} \rho(r) \leq L$. 
Let $\{ \lambda_{m}^x\}_{m=1}^n$ be the eigenvalues of the Jacobi matrix $( J_{ij}^x)_{1 \leq i,j \leq n}$ of $s_n^\rho$ at $x$.
By the differentiability of $s_n^\rho$, \eqref{L2} is equivalent to 
$\max_{1 \leq m \leq n}\sup_{  x \in \R^n\setminus\{0\}} |\lambda_{m}^x| =L$.
Fix $x=(x^i)_{i=1}^n \in \R^n\setminus\{0\}$  and let $\{v_m^x\}_{m=1}^n$ be an orthogonal basis such that $v_1=x/|x|$.
Then, for each $1 \leq m \leq  n$, $\rho'(|x|)|x|\delta_{m1}+\rho(|x|)$ is an eigenvalue $\lambda_{m}^x$ with eigenvector $v_m^x$ 
since we compute 
\[
J_{ij}^x =\rho' (|x|) \frac{x^i x^j}{|x|} + \rho(|x|) \delta_{ij},
\] 
where $\delta_{ii}=1$ and $\delta_{ij}= 0$ if $i \neq j$.
This provides     
\begin{align*}
\max_{1 \leq m \leq n}  \sup_{x \in \R^n\setminus\{0\} } \left| \lambda_m^x \right| 
=\max_{1 \leq m \leq n}  \sup_{x \in \R^n\setminus\{0\} } \left| \rho'(|x|)|x| \delta_{m1} + \rho(|x|) \right| 
=\sup_{r>0} \max\{ \rho(r),(s_1^\rho)'(r)\}= L. 
\end{align*}
$\qedd$
\end{proof}
Let us next consider the following conditions on $f$.
\begin{enumerate}[{\rm (a)}]
\item \label{condi1}
$f$ is positive on $(0,R_f)$ and $\varliminf_{r \downarrow 0}f(r)>0$.
\item \label{condi3}
There exist a continuous, positive function $\psi$ on $(R,R_f)$ for some $R>0$ such that
\begin{enumerate}[{\rm ({b}1)}]
\item\label{condic2}
$\displaystyle
\lambda f(r)\psi(r)r^{n-1} 
\leq \int_r^{R_f} f(s) s^{n-1} ds \leq \frac1\lambda  f(r) \psi(r)r^{n-1} $\quad for some $\lambda \in (0,1)$.
  \item\label{condic1}
$\displaystyle \varliminf_{r  \uparrow R_f } \{ \psi(r)^2 \ln \left(  f(r) \psi(r) r^{n-1} \right) \} >-\infty$.
\end{enumerate}
\end{enumerate}
In the case of $f(r)=\exp(-r^2/2)$, \eqref{condi3} holds for $\psi(r)=1/r$, especially, for any $\lambda \in (0,1)$
\begin{equation}\label{ineq2}
\lambda \exp\left(  -\frac{r^2}{2} \right) r^{n-2}  \leq  \int_{r}^{\infty} \exp\left( -\frac{s^2}{2} \right) s^{n-1} \leq \frac1\lambda \exp\left(  -\frac{r^2}{2} \right)  r^{n-2}
\end{equation}
holds for $r$ large enough.
\begin{proposition}\label{Lip}
For the generalized Poincar\'e limit $\mu_n^f$ with $\rho$, 
assume $\varlimsup_{r \downarrow 0}f(r) < \infty$.
Then  
$\lip{n}$ is equivalent to the combination of~\eqref{condi1} and~\eqref{condi3}.
\end{proposition}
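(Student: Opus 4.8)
The plan is to use Lemma~\ref{lip1}, which reduces the condition $\lip{n}$ to the one-dimensional condition $\lip{1}$, and by part~\eqref{L3} of that lemma to the two requirements $\lim_{r\downarrow0}s_1^\rho(r)=0$ and $\sup_{r>0}(s_1^\rho)'(r)<\infty$. Recall that $\sigma=\sigma_n$ is characterized by~\eqref{def}, so that $s_1^\rho=\sigma^{-1}$ and $(s_1^\rho)'(\sigma(r))=1/\sigma'(r)$. Hence $\lip{n}$ is equivalent to the pair of conditions
\begin{equation*}
\lim_{r\downarrow0}\sigma(r)=+\infty\ \text{(i.e. }R_f\ \text{may be }\infty\text{ but }r_f=0\text{ is forced)},\qquad \inf_{r\in(0,R_f)}\sigma'(r)>0.
\end{equation*}
Actually one must be slightly careful: $s_1^\rho$ is defined on $(0,\infty)=\sigma((r_f,R_f))$, and $\lip{1}$ forces, via Lemma~\ref{lip1}\eqref{L3}, that $\sigma(r)\to0$ as $r\downarrow r_f$ with $r_f=0$; combined with $\sigma$ being an increasing bijection onto $(0,\infty)$ this is exactly the statement that $\sup_{r>0}\rho(r)<\infty$ together with $s_1^\rho$ extending continuously by $0$ at $0$. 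So the whole proposition becomes: \emph{under $\varlimsup_{r\downarrow0}f(r)<\infty$, the inequality $\inf_{r\in(0,R_f)}\sigma'(r)>0$ holds and $r_f=0$ if and only if \eqref{condi1} and \eqref{condi3} hold.} I will differentiate~\eqref{def} to get the working formula
\begin{equation*}
(2\pi)^{-n/2}\exp\!\left(-\frac{\sigma(r)^2}{2}\right)\sigma(r)^{n-1}\sigma'(r)=\frac{f(r)r^{n-1}}{M_n^f},
\end{equation*}
and I will also use the integrated form, i.e. that $\int_0^{\sigma(r)}e^{-s^2/2}s^{n-1}ds$ equals a constant times $\int_{r_f}^r f(s)s^{n-1}ds$, equivalently $\int_{\sigma(r)}^\infty e^{-s^2/2}s^{n-1}ds$ is proportional to $\int_r^{R_f}f(s)s^{n-1}ds$.

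For the direction \eqref{condi1}$+$\eqref{condi3}$\Rightarrow\lip{n}$: First, \eqref{condi1} ($\varliminf_{r\downarrow0}f(r)>0$, $f>0$ on $(0,R_f)$) together with $\varlimsup_{r\downarrow0}f(r)<\infty$ shows $f(r)r^{n-1}\asymp r^{n-1}$ near $0$, so $\int_{r_f}^r f(s)s^{n-1}ds\asymp r^n$ near $0$; matching with $\int_0^{\sigma(r)}e^{-s^2/2}s^{n-1}ds\asymp\sigma(r)^n$ for small $\sigma(r)$ gives $\sigma(r)\asymp r$ near $0$, hence $r_f=0$ and $\sigma'(r)$ is bounded below near $0$ (one gets $\sigma'(r)\to$ a positive constant, using the differentiated identity). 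Away from $0$ and away from $R_f$, on any compact subinterval of $(0,R_f)$ the function $\sigma'$ is continuous and positive, so bounded below. The remaining issue is the behaviour as $r\uparrow R_f$ (only relevant when $R_f=\infty$, or when $R_f<\infty$ but $\sigma'$ could degenerate): here I use~\eqref{condic2} to write $\sigma(r)$-asymptotics via the tail comparison $\int_{\sigma(r)}^\infty e^{-s^2/2}s^{n-1}ds\asymp e^{-\sigma(r)^2/2}\sigma(r)^{n-2}$ (valid for large $\sigma(r)$, cf.~\eqref{ineq2}), so that $f(r)\psi(r)r^{n-1}\asymp e^{-\sigma(r)^2/2}\sigma(r)^{n-2}$; plugging this into the differentiated identity yields $\sigma'(r)\asymp \sigma(r)/\psi(r)\cdot(\text{bounded factors})$, and then \eqref{condic1}, which says $\psi(r)^2\ln(f(r)\psi(r)r^{n-1})$ is bounded below, translates into $\psi(r)^2\sigma(r)^2$ bounded below, i.e. $\sigma(r)\psi(r)^{-1}$ bounded below, giving $\inf\sigma'>0$ near $R_f$ as well. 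Combining the three regimes gives $\inf_{r\in(0,R_f)}\sigma'(r)>0$, hence $\lip{n}$.

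For the converse $\lip{n}\Rightarrow\eqref{condi1}+\eqref{condi3}$: assuming $c:=\inf_{(0,R_f)}\sigma'>0$ and $r_f=0$, we get $\sigma(r)\geq cr$, and from $\sigma'$ being the derivative of an increasing bijection onto $(0,\infty)$ also an upper linear bound $\sigma(r)\le Cr$ near $0$ after using $\varlimsup_{r\downarrow0}f(r)<\infty$; feeding $\sigma(r)\asymp r$ near $0$ back through the differentiated identity gives $f(r)\asymp$ const near $0$, which is~\eqref{condi1} (positivity of $f$ on $(0,R_f)$ being immediate from $\sigma$ strictly increasing and~\eqref{def}). For~\eqref{condi3} I will simply \emph{construct} the witness function by setting
\begin{equation*}
\psi(r):=\frac{\sigma(r)}{r\,\sigma'(r)}\quad\text{for }r\in(R,R_f),
\end{equation*}
which is continuous and positive; then~\eqref{condic2} becomes the statement that $\int_r^{R_f}f(s)s^{n-1}ds$ is comparable to $f(r)\psi(r)r^{n-1}=f(r)\sigma(r)/\sigma'(r)\cdot$(const)$\asymp e^{-\sigma(r)^2/2}\sigma(r)^{n-1}$, which follows from the tail estimate $\int_{\sigma(r)}^\infty e^{-s^2/2}s^{n-1}ds\asymp e^{-\sigma(r)^2/2}\sigma(r)^{n-2}$ once we know $\sigma(r)\to\infty$ and $\sigma(r)/\sigma'(r)$ does not oscillate too wildly — here $\sigma'\le L$ (the Lipschitz bound) and $\sigma'\ge c$ give $\sigma(r)/(L)\le\psi(r)r/(\text{...})$, enough to pin the comparison up to constants. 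Finally~\eqref{condic1}: $\psi(r)^2\ln(f(r)\psi(r)r^{n-1})\asymp\big(\sigma(r)/(r\sigma'(r))\big)^2\cdot\big(-\sigma(r)^2/2+O(\ln\sigma(r))\big)$, and since $\sigma'\ge c$ and (near $R_f$, if $R_f<\infty$) $r$ is bounded while (if $R_f=\infty$) $\sigma(r)/r\le L$, the product $\psi(r)^2\sigma(r)^2$ stays bounded, so the $\varliminf$ is $>-\infty$.

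The main obstacle I expect is the endpoint analysis at $r\uparrow R_f$, precisely the bookkeeping that lets one pass between the three a~priori different quantities $\sigma'(r)$, $\sigma(r)/r$, and $\psi(r)$ while keeping all comparisons uniform; this is where the two-sided sandwich in~\eqref{condic2} and the logarithmic lower bound~\eqref{condic1} are doing real work, and where one must be careful that $\sigma$ is only assumed $C^1$ with no control on $\sigma''$, so arguments must go through integrated (tail-integral) comparisons rather than differentiating twice. The near-zero analysis and the interior are routine given $\varlimsup_{r\downarrow0}f(r)<\infty$ and Lemma~\ref{lip1}.
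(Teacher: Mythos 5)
Your overall strategy --- reducing $\lip{n}$ via Lemma~\ref{lip1} to ``$r_f=0$ and $\inf_{(0,R_f)}\sigma'>0$'' and then analyzing the three regimes $r\downarrow 0$, interior, $r\uparrow R_f$ through the differentiated identity~\eqref{rell} and the tail comparison~\eqref{ineq2} --- is exactly the paper's, and your treatment of the regime $r\downarrow 0$ and of condition~\eqref{condi1} is essentially correct. However, there are two problems, one recoverable and one not.

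First, in the forward direction near $R_f$ your asymptotics are inverted. Combining~\eqref{condic2} with~\eqref{ineq2} and~\eqref{def} gives $f(r)r^{n-1}\asymp e^{-\sigma(r)^2/2}\sigma(r)^{n-2}/\psi(r)$, hence $\sigma'\asymp 1/(\psi\sigma)$, not $\sigma/\psi$; and since $f\psi r^{n-1}\to 0$ there, condition~\eqref{condic1} bounds $\psi^2\sigma^2$ from \emph{above}, not below --- which is precisely what makes $1/(\psi\sigma)$ bounded below. The conclusion $\varliminf_{r\uparrow R_f}\sigma'>0$ survives, but the chain of comparabilities as you wrote it does not produce it.

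Second --- and this is the genuine gap --- your witness $\psi(r)=\sigma(r)/(r\,\sigma'(r))$ in the converse direction does not work. By~\eqref{rell} one has $f(r)\psi(r)r^{n-1}=\mathrm{const}\cdot e^{-\sigma(r)^2/2}\sigma(r)^{n}/r$, whereas $\int_r^{R_f}f(s)s^{n-1}ds\asymp e^{-\sigma(r)^2/2}\sigma(r)^{n-2}$; the ratio $\sigma(r)^2/r$ diverges as $r\uparrow R_f$ in every case (since $\sigma\to\infty$ and either $r$ stays bounded or $\sigma\geq cr$), so~\eqref{condic2} fails. Already for the Gaussian, where $\sigma(r)=r$, your $\psi\equiv 1$ while the correct witness is $1/r$. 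The choice that works is $\psi:=1/(\sigma\sigma')$, for which $f\psi r^{n-1}=\mathrm{const}\cdot e^{-\sigma^2/2}\sigma^{n-2}$ exactly, making~\eqref{condic2} immediate from~\eqref{ineq2}; and~\eqref{condic1} is then obtained not by the direct estimate you sketch (which also fails for your $\psi$, e.g.\ $\psi^2\sigma^2=r^2\to\infty$ in the Gaussian case) but from the separate fact, Lemma~\ref{necc}, that under $\lip{n}$ any continuous positive $\psi$ satisfying~\eqref{condic2} automatically satisfies~\eqref{condic1}.
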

\begin{proof}
Let $\sigma$ be the inverse function of $s_1^\rho$, then  
\begin{align}\label{rell}
&\frac{M_n^f} {(2 \pi)^{n/2}} \sigma'(r) = f(r) \exp\left( \frac{\sigma(r)^2}{2} \right) \left\{\frac{r}{\sigma(r)} \right\}^{n-1}
\end{align}
holds on $(r_f,R_f)$ by differentiating~\eqref{def}. 
We also have $\lim_{r \downarrow r_f} \sigma(r)=0, \lim_{r \uparrow R_f} \sigma(r)=\infty$ and the equivalence between 
$\inf_{r>0} (s_1^\rho)'(r)<\infty$ and $\inf_{r \in(r_f, R_f)} \sigma'(r)>0$. 

Assume~\eqref{condi1} and~\eqref{condi3}.
Since we deduce $r_f=0$ from~\eqref{condi1}, $\lim_{r \downarrow 0} s^\rho_1(r)=0$ holds.
Hence it is enough for~$\lip{n}$ to show $\varliminf_{r \downarrow 0} \sigma'(r),\ \varliminf_{r \uparrow R_f} \sigma'(r) >0$
by Lemma~\ref{lip1}(iii) together with~\eqref{rell} and~\eqref{condi1}.

The conditions~\eqref{def} and $\varlimsup_{r \downarrow 0}f(r)<\infty$ yield   
\begin{align*}
\varlimsup_{r \downarrow 0} \left[ \sigma'(r) \left\{ \frac{\sigma(r)}{r}\right\}^{n-1} \right] 
= \frac{(2 \pi)^{n/2}}{M_n^f}\varlimsup_{r \downarrow 0} \left\{ f(r) \exp\left( \frac{\sigma(r)^2}{2}\right) \right\} 
\leq \frac{(2 \pi)^{n/2}}{M_n^f} \varlimsup_{r \downarrow 0} f(r) <\infty,  
\end{align*}
which ensures the existences of $c>0$ and $r_0>0$ such that 
$\sigma'(r) \sigma(r)^{n-1} <cr^{n-1}$ on $(0,r_0)$.
Integrating it with the condition $\sigma(0)=0$ implies $ \sigma(r)^n< cr^n$ on $(0,r_0)$.
We thus have
\begin{align*}
0< \frac{(2 \pi)^{n/2}}{ M_n^f} \varliminf_{r \downarrow 0}   f(r)  
=\varliminf_{r \downarrow 0} \left[ \sigma'(r) \left\{ \frac{\sigma(r)}{r}\right\}^{n-1} \right]
\leq c^{(n-1)/n}\varliminf_{r \downarrow 0}    \sigma'(r).
\end{align*}

It follows from~\eqref{condic2}, \eqref{ineq2} and~\eqref{def} that 
\[
\lambda \exp\left(-\frac{\sigma(r)^2}{4} \right) \geq \frac{M_n^f}{(2\pi)^{n/2}}  \int_{\sigma(r)}^{\infty}  \exp\left( -\frac{s^2}{2} \right) s^{n-1} ds= \int_r^{R_f} f(s) s^{n-1} ds \geq \lambda f(r)\psi(r) r^{n-1}  
\]
for $r<R_f$ large enough, which implies 
\[
0 \leq  \varlimsup_{r  \uparrow R_f } \left\{ \psi(r)^2 \sigma(r)^2 \right\} 
\leq  -4\varliminf_{r  \uparrow R_f } \left\{  \psi(r)^2 \ln (  f(r) \psi(r)r^{n-1})\right\}  <\infty.
\]
Similarly, we deduce from~\eqref{def} with the converse inequalities of~\eqref{condic2} and~\eqref{ineq2} that 
\begin{align*}
\frac{M_n^f}{(2\pi)^{n/2}}\varliminf_{ r \uparrow R_f }\sigma'(r) 
=\varliminf_{ r \uparrow R_f } \left[ f(r)\exp\left( \frac{\sigma(r)^2}{2} \right)  \left\{ \frac{r}{\sigma(r)}\right\}^{n-1} \right]
\geq \varliminf_{ r \uparrow R_ f} \frac{\lambda^2}{\psi(r) \sigma(r) } >0.
\end{align*}
We thus have $\varliminf_{r \downarrow 0}  \sigma'(r),\ \varliminf_{r \uparrow R_f} \sigma'(r) >0$.

Conversely, suppose~$\lip{n}$.
Lemma~\ref{lip1}\eqref{L3} and Proposition~\ref{rad} yield $r_f=0$, hence $f(r)>0$ on $(0,R_f)$ holds by~\eqref{rell}.
It moreover follows from the mean value theorem that 
$\varliminf_{r \downarrow 0}  \sigma'(r) \leq \varliminf_{r \downarrow 0} \sigma(r)/r$, which means    
\begin{align*}
0<
\varliminf_{r \downarrow 0}  \sigma'(r)^n
\leq 
\varliminf_{r \downarrow 0} \left[\exp\left(-\frac{\sigma(r)^2}{2}\right) \sigma'(r) \left\{ \frac{\sigma(r)}{r}\right\}^{n-1} \right]
= \frac{(2 \pi)^{n/2}}{M_n^f} \varliminf_{r \downarrow 0} f(r).  
\end{align*}
We thus have~\eqref{condi1}.

Due to $\inf_{r\in(0,R_f)} \sigma'(r)>0$,
$\psi:=1/(\sigma\sigma' )$ is a continuous, positive function on $(0, R_f)$ 
and it holds for any $\lambda \in (0,1)$ that 
\begin{align*}
\lambda f(r)r^{n-1} \psi(r) =  
\lambda e^{-\sigma(r)^2/2}  \sigma(r)^{n-2}  
&\leq  \int_{\sigma(r)}^{\infty} e^{-s^2/2} s^{n-1}  ds 
=\frac{(2\pi)^{n/2}}{M_n^f} \int_r^{R_f} f(s) s^{n-1} ds \\
& \leq  \frac1\lambda e^{-\sigma(r)^2/2}  \sigma(r)^{n-2}  =\frac1\lambda f(r)r^{n-1} \psi(r) 
\end{align*}
for $r<R_f$ large enough, which means~\eqref{condic2}.
This with Lemma~\ref{necc} below concludes the proof of Proposition~\ref{Lip}. 
$\qedd$
\end{proof}
\begin{lemma}\label{necc}
Given a generalized Poincar\'e limit $\mu_n^f=\nu_n^\rho$, 
if $\rho$ satisfies~$\lip{n}$ and a continuous, positive function $\psi$ on $(R,R_f)$ for some $R>0$ satisfies~\eqref{condic2}, 
then~\eqref{condic1} holds.
\end{lemma}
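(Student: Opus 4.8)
The plan is to translate the hypothesis $\lip{n}$ into an asymptotic growth condition on $\sigma'$ near $R_f$ via Lemma~\ref{lip1} and the identity~\eqref{rell}, and then combine it with the two-sided bound~\eqref{condic2} to extract the claimed lower bound on $\psi(r)^2\ln\bigl(f(r)\psi(r)r^{n-1}\bigr)$. Concretely, from $\lip{n}$ and Lemma~\ref{lip1}\eqref{L3} we get $\sup_{r>0}(s_1^\rho)'(r)=L<\infty$, hence $\inf_{r\in(0,R_f)}\sigma'(r)\geq 1/L>0$; so $\sigma(r)\to\infty$ as $r\uparrow R_f$ and $\sigma'(r)$ is bounded below. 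First I would use~\eqref{rell} to rewrite $f(r)\psi(r)r^{n-1}$ entirely in terms of $\sigma$: since~\eqref{condic2} with the Gaussian estimate~\eqref{ineq2} applied to $\sigma(r)$ (as in the forward half of Proposition~\ref{Lip}) gives, for $r<R_f$ large enough,
\[
\lambda\, f(r)\psi(r)r^{n-1}\leq \int_r^{R_f}f(s)s^{n-1}\,ds
=\frac{(2\pi)^{n/2}}{M_n^f}\int_{\sigma(r)}^\infty e^{-s^2/2}s^{n-1}\,ds
\leq \frac1\lambda e^{-\sigma(r)^2/2}\sigma(r)^{n-2},
\]
and the analogous lower bound, so that $f(r)\psi(r)r^{n-1}$ is comparable (up to the fixed factor $\lambda^2$) to $e^{-\sigma(r)^2/2}\sigma(r)^{n-2}$.

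Next I would take logarithms: $\ln\bigl(f(r)\psi(r)r^{n-1}\bigr) = -\tfrac12\sigma(r)^2 + (n-2)\ln\sigma(r) + O(1)$ as $r\uparrow R_f$. Multiplying by $\psi(r)^2$ and using $\ln\sigma(r)=o(\sigma(r)^2)$, the problem reduces to showing $\varliminf_{r\uparrow R_f}\psi(r)^2\sigma(r)^2 < \infty$, i.e. that $\psi(r)\sigma(r)$ stays bounded along some sequence $r\uparrow R_f$ (boundedness outright would be even better but a liminf suffices). Here is where $\lip{n}$ enters decisively: from~\eqref{rell} and $\sigma'\geq 1/L$ one gets a lower bound on $f(r)\exp(\sigma(r)^2/2)\{r/\sigma(r)\}^{n-1}$, and then the \emph{lower} half of~\eqref{condic2} together with~\eqref{ineq2} forces
\[
\frac{\lambda^2}{\psi(r)\sigma(r)}
\leq f(r)\exp\!\left(\frac{\sigma(r)^2}{2}\right)\!\left\{\frac{r}{\sigma(r)}\right\}^{n-1}
= \frac{M_n^f}{(2\pi)^{n/2}}\,\sigma'(r),
\]
so $\psi(r)\sigma(r)\geq \lambda^2(2\pi)^{n/2}/(M_n^f\,\sigma'(r))$; that is the wrong direction. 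The useful estimate instead comes from the \emph{upper} bound on the integral: $\lambda f(r)\psi(r)r^{n-1}\leq \frac1\lambda e^{-\sigma(r)^2/2}\sigma(r)^{n-2}$ combined with the lower bound $\lambda e^{-\sigma(r)^2/2}\sigma(r)^{n-2}\leq \frac1\lambda f(r)\psi(r)r^{n-1}$ pins $f(r)\psi(r)r^{n-1}$ to within $e^{-\sigma(r)^2/2}\sigma(r)^{n-2}$; then $\psi(r)^2\ln(f(r)\psi(r)r^{n-1}) \geq \psi(r)^2\bigl(-\tfrac12\sigma(r)^2 + (n-2)\ln\sigma(r) - \ln\tfrac1{\lambda^2}\bigr)$, so it remains exactly to bound $\psi(r)^2\sigma(r)^2$ from above along a sequence.

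For that final bound I would argue by contradiction: if $\psi(r)^2\sigma(r)^2\to\infty$ as $r\uparrow R_f$, then $\psi(r)\sigma(r)\to\infty$, and I would use~\eqref{rell} rearranged as $\psi(r)=\dfrac{(2\pi)^{n/2}}{M_n^f}\dfrac{1}{\sigma'(r)}\cdot\dfrac{f(r)\psi(r)r^{n-1}}{f(r)\exp(\sigma(r)^2/2)\{r/\sigma(r)\}^{n-1}}$... more cleanly: since $\psi$ was shown comparable to $1/(\sigma\sigma')$ in the converse part of Proposition~\ref{Lip}'s bookkeeping — more precisely, from~\eqref{condic2} and~\eqref{ineq2} the quantity $f(r)\psi(r)r^{n-1}\asymp e^{-\sigma(r)^2/2}\sigma(r)^{n-2}$, and from~\eqref{rell} $f(r)r^{n-1}= \frac{M_n^f}{(2\pi)^{n/2}}\sigma'(r)\sigma(r)^{n-1}e^{-\sigma(r)^2/2}$, dividing gives $\psi(r)\asymp \frac{1}{\sigma'(r)\sigma(r)}$, with constants depending only on $\lambda,n,M_n^f$. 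Hence $\psi(r)^2\sigma(r)^2 \asymp 1/\sigma'(r)^2 \leq L^2$, which is bounded. Feeding this back, $\psi(r)^2\ln(f(r)\psi(r)r^{n-1}) = \psi(r)^2\bigl(-\tfrac12\sigma(r)^2+O(\ln\sigma(r))\bigr)$, and $\psi(r)^2\sigma(r)^2$ bounded plus $\psi(r)^2\ln\sigma(r)= \psi(r)^2\sigma(r)^2\cdot\frac{\ln\sigma(r)}{\sigma(r)^2}\to 0$ give $\varliminf_{r\uparrow R_f}\psi(r)^2\ln(f(r)\psi(r)r^{n-1}) \geq -\tfrac12\limsup \psi(r)^2\sigma(r)^2 \geq -\tfrac{L^2}2 > -\infty$, which is~\eqref{condic1}.

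The main obstacle I anticipate is the bookkeeping around the \emph{two-sided} comparison $\psi(r)\asymp 1/(\sigma'(r)\sigma(r))$: one must be careful that~\eqref{condic2} is only assumed for some function $\psi$ (not the canonical $1/(\sigma\sigma')$), so the argument must show that \emph{any} $\psi$ satisfying~\eqref{condic2} is forced to be comparable to $1/(\sigma\sigma')$ near $R_f$, using the Gaussian two-sided estimate~\eqref{ineq2} to control $\int_{\sigma(r)}^\infty e^{-s^2/2}s^{n-1}\,ds$ from both sides. Once that comparability is in hand the rest is elementary asymptotics. A secondary subtlety is making sure $R_f$ may be $+\infty$ as well as finite; the estimates above are uniform in that distinction since everything is phrased through $\sigma(r)\to\infty$.
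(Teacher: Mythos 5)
Your argument is correct and follows essentially the same route as the paper's proof: both use the upper half of \eqref{condic2} with the Gaussian tail estimate \eqref{ineq2} to get $-\psi(r)^2\ln\bigl(f(r)\psi(r)r^{n-1}\bigr)\leq \psi(r)^2\sigma(r)^2+o(1)$, and the lower half together with \eqref{rell} and $\varliminf_{r\uparrow R_f}\sigma'(r)>0$ (from $\lip{n}$ via Lemma~\ref{lip1}) to bound $\psi(r)\sigma(r)$ near $R_f$. One small slip: condition \eqref{condic1} requires $\varlimsup_{r\uparrow R_f}\psi(r)^2\sigma(r)^2<\infty$, not merely a finite $\varliminf$ as your parenthetical suggests, but your actual estimate $\psi\sigma\lesssim 1/\sigma'\leq L$ holds for all $r$ near $R_f$ and hence delivers the needed $\varlimsup$ bound, so nothing is lost.
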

\begin{proof}
For the inverse function $\sigma$ of $s_1^\rho$, 
we deduce from~\eqref{condic2}, \eqref{ineq2} and~\eqref{def} that 
\begin{align*}
\frac1\lambda \exp\left(- \sigma(r)^2\right) \leq \frac{M_n^f}{(2\pi)^{n/2}}  
\int_{\sigma(r)}^{\infty}  \exp\left( -\frac{s^2}{2}\right) s^{n-1} ds= \int_r^{R_f} f(s) s^{n-1} ds \leq \frac1\lambda f(r)\psi(r) r^{n-1},   
\end{align*}
hence $\psi(r)^2\sigma(r)^2 \geq -\psi(r)^2 \ln ( f(r)\psi(r)r^{n-1} )>0$.
Since~$\lip{n}$ leads to $\varliminf_{r \uparrow R_f} \sigma'(r)>0$, 
\eqref{def} with the converse inequalities of~\eqref{condic2} and~\eqref{ineq2} yields
\begin{align*}
0<\frac{M_n^f}{(2\pi)^{n/2}}\varliminf_{ r \uparrow R_f } \sigma'(r) 
=\varliminf_{ r \uparrow R_f } \left[f(r)\exp\left(\frac{\sigma(r)^2}{2}\right)  \left\{ \frac{r}{\sigma(r)}\right\}^{n-1} \right] 
\leq  \varliminf_{ r \uparrow R_ f}\frac1{\lambda^2 \psi(r) \sigma(r) } .
\end{align*}
Combining the two inequalities,  we  have~\eqref{condic1}.
$\qedd$
\end{proof} 
We give a simple criterion $f$ to satisfy~\eqref{condi3}, which concerns the logarithmic concavity (see Remark~\ref{log} below for the definition of logarithmic concavity).
\begin{lemma}\label{logcvx}
For a radial probability measure $\mu_n^f$, suppose that $f$ is $C^2$, positive on $(R,R_f)$ for some $R>0$ and $\lim_{r \uparrow R_f}f(r)=0$.
Moreover if the function $\Phi:=-\ln f$ satisfies 
$\lim_{r \uparrow R_f} \Phi''( r ) \in (0,\infty]$, $\lim_{r \uparrow R_f} \Phi'( r ) =\infty$ and 
$\varlimsup_{r \uparrow R_f} \Phi''( r )/\Phi'(r)^2<\infty$, then \eqref{condi3} holds.
\end{lemma}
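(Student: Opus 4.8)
The plan is to verify condition~\eqref{condi3} directly, with the explicit choice $\psi:=1/\Phi'$. Since $\Phi=-\ln f$ is $C^2$ near $R_f$, since $\Phi'\to\infty$, and since $\Phi''\to L\in(0,\infty]$ forces $\Phi''>0$ (hence $\Phi'$ increasing, and eventually $\Phi'>0$) near $R_f$, the function $\psi$ is continuous and positive on some interval $(R',R_f)$. Write $c_0:=\varlimsup_{r\uparrow R_f}\Phi''(r)/\Phi'(r)^2<\infty$, and note that $\lim_{r\uparrow R_f}f(r)=0$ gives $\Phi(r)\to\infty$. I will choose $R'\in(R,R_f)$ large enough that on $(R',R_f)$ all the relevant ``eventually'' facts hold: $\Phi'>0$, $\Phi''>0$, $\Phi''/\Phi'^2<c_0+\varepsilon$, and $(n-1)/(r\Phi'(r))<\varepsilon$, for a fixed $\varepsilon\in(0,1)$.

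For~\eqref{condic2} the key device is the substitution $u=\Phi(s)$, under which
\[
\int_r^{R_f} f(s)s^{n-1}\,ds=\int_{\Phi(r)}^\infty e^{-u}g(u)\,du,\qquad g(u):=\frac{\left(\Phi^{-1}(u)\right)^{n-1}}{\Phi'\!\left(\Phi^{-1}(u)\right)},
\]
so that $e^{-\Phi(r)}g(\Phi(r))=f(r)\psi(r)r^{n-1}$ is exactly the quantity appearing in~\eqref{condic2}. A direct computation gives, writing $s=\Phi^{-1}(u)$,
\[
(\ln g)'(u)=\frac{n-1}{s\,\Phi'(s)}-\frac{\Phi''(s)}{\Phi'(s)^2}.
\]
Because $\Phi''>0$ near $R_f$ the second term is negative, so $(\ln g)'(u)\le (n-1)/(s\Phi'(s))<\varepsilon$; because $\varlimsup\Phi''/\Phi'^2=c_0$, the second term is at least $-(c_0+\varepsilon)$, whence $(\ln g)'(u)\ge-(c_0+\varepsilon)$, for $u\ge\Phi(R')$. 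Integrating these bounds between $u_1:=\Phi(r)$ and $u\ge u_1$ yields $e^{-(c_0+\varepsilon)(u-u_1)}\le g(u)/g(u_1)\le e^{\varepsilon(u-u_1)}$; multiplying by $e^{-u}$ and integrating over $u\in[u_1,\infty)$ then gives
\[
\frac{1}{1+c_0+\varepsilon}\,f(r)\psi(r)r^{n-1}\le\int_r^{R_f} f(s)s^{n-1}\,ds\le\frac{1}{1-\varepsilon}\,f(r)\psi(r)r^{n-1}
\]
for $r\in(R',R_f)$, which is~\eqref{condic2} with $\lambda:=\min\{1-\varepsilon,(1+c_0+\varepsilon)^{-1}\}\in(0,1)$ and with $R$ taken to be $R'$.

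For~\eqref{condic1} I expand
\[
\psi(r)^2\ln\!\left(f(r)\psi(r)r^{n-1}\right)=\frac{-\Phi(r)-\ln\Phi'(r)+(n-1)\ln r}{\Phi'(r)^2}.
\]
Since $\Phi'\to\infty$ one has $\ln\Phi'(r)/\Phi'(r)^2\to0$, and since $\Phi''\to L\in(0,\infty]$ forces $\Phi'$ to grow at least linearly, $(n-1)\ln r/\Phi'(r)^2\to0$ as well (trivially if $R_f<\infty$). Finally L'Hôpital's rule applied to $\Phi/\Phi'^2$ — legitimate because $\Phi'^2$ is eventually strictly increasing to $\infty$ with $(\Phi'^2)'=2\Phi'\Phi''$ — gives $\lim_{r\uparrow R_f}\Phi(r)/\Phi'(r)^2=\lim_{r\uparrow R_f}1/(2\Phi''(r))=1/(2L)\in[0,\infty)$. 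Hence the displayed quantity tends to $-1/(2L)>-\infty$, which is~\eqref{condic1}, and \eqref{condi3} follows.

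The main point to get right is the lower bound in~\eqref{condic2}: without the hypothesis $\varlimsup\Phi''/\Phi'^2<\infty$, the density $f=e^{-\Phi}$ could collapse so sharply just beyond $r$ that $\int_r^{R_f}f(s)s^{n-1}\,ds$ would be negligible compared with $f(r)\psi(r)r^{n-1}$; it is precisely the resulting lower bound $(\ln g)'\ge-(c_0+\varepsilon)$ that this hypothesis supplies. By contrast the upper bound is essentially automatic from log-concavity ($\Phi''>0$, which makes $(\ln g)'$ eventually $\le\varepsilon$), and the verification of~\eqref{condic1} is a routine asymptotic computation once $\psi=1/\Phi'$ is fixed.
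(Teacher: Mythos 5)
Your proof is correct and follows essentially the same route as the paper: the same choice $\psi=1/\Phi'$, the same control of the quantity $(n-1)/(r\Phi'(r))-\Phi''(r)/\Phi'(r)^2$ to obtain~\eqref{condic2}, and the same expansion plus L'H\^opital argument for~\eqref{condic1}. The only difference is one of packaging: you derive the two-sided bound in~\eqref{condic2} by substituting $u=\Phi(s)$ and integrating bounds on $(\ln g)'$, whereas the paper reaches the same inequalities by observing that the auxiliary function $\Psi_c(r)=f(r)\psi(r)r^{n-1}-c\int_r^{R_f}f(s)s^{n-1}\,ds$ tends to $0$ as $r\uparrow R_f$ and that $\Psi_c'$ has a definite sign near $R_f$ for suitable $c$.
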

\begin{proof}
Let us prove that~\eqref{condi3} holds for $\psi:=1/\Phi'$.
For any $c\in \R$, define the function by
\[
\Psi_c(r):=f(r)\psi(r)r^{n-1}-c\int_r^{R_f} f(s)s^{n-1} ds.  
\]
It turns out that $\lim_{r \uparrow R_f} \Psi_c(r)=0$ and 
\begin{align*}
\Psi_c'(r)
=f(r)r^{n-1}\left(-1 -\frac{\Phi''(r)}{\Phi'(r)^2} +\frac{n-1}{r}\psi +c \right). 
\end{align*}
This means that, for $\lambda \in(0,1) $ satisfying 
\[
\lambda < \varliminf_{r \uparrow R^f} \left(1 +\frac{\Phi''(r)}{\Phi'(r)^2} -\frac{n-1}{r}\psi \right) 
\leq  \varlimsup_{r \uparrow R^f} \left(1 +\frac{\Phi''(r)}{\Phi'(r)^2} -\frac{n-1}{r}\psi \right) <\frac1\lambda,
\]
$\Psi_{1/\lambda} (r) \leq 0 \leq \Psi_\lambda(r)$ holds for $r<R_f$ large enough, which is equivalent to~\eqref{condic2}. 
Note that the existence of $\lambda$ is guaranteed by the assumptions.

Since we compute directly 
\begin{align*}
\varliminf_{r  \uparrow R_f } \left\{ \psi(r)^2 \ln \left(  f(r) \psi(r) r^{n-1} \right) \right\}
&=\varliminf_{r  \uparrow R_f }  
  \left\{ -\frac{\Phi(r)} {\Phi'(r)^2}  - \frac{\ln \Phi'(r)}{\Phi'(r)^2}+(n-1)\frac{\ln r}{\Phi'(r)^2} \right\} \\
&\geq -\varlimsup_{r  \uparrow R_f }  \frac{\Phi(r)} {\Phi'(r)^2}  + (n-1) \varliminf_{r  \uparrow R_f }\frac{r}{\Phi'(r)^2}
\end{align*}
and find $\varliminf_{r  \uparrow R_f }\ln r/\Phi'(r)^2 \in (-\infty,\infty]$ (note that $R_f<1$ may happen), 
we only need to show $\varlimsup_{r  \uparrow R_f }\Phi(r)/\Phi'(r)^2 <\infty$ for~\eqref{condic1}.
This follows from L'H\^{o}pital's rule, that is, 
\[
\lim_{r \uparrow R_f }\frac{\Phi(r)}{\Phi'(r)^2}
=\lim_{r  \uparrow R_f }\frac{\Phi'(r)}{(\Phi'(r)^2)'}
=\lim_{r  \uparrow R_f }\frac{1}{2\Phi''(r)}<\infty.
\]
$\qedd$
\end{proof}
\begin{remark}\label{log}
An absolutely continuous probability measure $\mu$ on $\R^n$ with respect to the Lebesgue measure is said to be {\it logarithmic concave} 
if $-\log (d\mu/dx):\R^n \to (-\infty,\infty]$ is convex.
For such probability measures, Bobkov~\cite{Bob} derived the isoperimetric profile of Gaussian type.
The conditions in Lemma~\ref{logcvx} also concern the convexity of $-\log (d\mu/dx)$, 
however, we only consider the behavior of $f$ around $R_f$, not on whole $\R$.
\end{remark}

Let us show that some probability measures characterized by $\varphi$-exponential functions satisfy the conditions in Theorems~\ref{poin}, \ref{isop}.
See \cite{OT} and references therein for details of $\varphi$-exponential functions.
In what follows, $\varphi$ is always assumed to be  a continuous,  positive, non-decreasing function on $(0,\infty)$.

The {\it $\varphi$-logarithmic function} is defined for $t \in (0,\infty)$ by 
$\ln_{\varphi}(t):=\int_1^t 1/{\varphi(s)}ds$, 
which is strictly increasing. 
Set $l_{\varphi} : =\lim_{t\downarrow 0} \ln_{\varphi}(t)$ and $ L_{\varphi} := \lim_{t\uparrow \infty} \ln_{\varphi}(t)$.
The inverse function of $\ln_{\varphi}$ is called the {\it $\varphi$-exponential function} and is naturally extended to $\R$ as
\[ 
\exp_{\varphi}(\tau):=
\begin{cases}
0   &\text{if\ }  \tau  \leq l_{\varphi}, \\
\ln_{\varphi}^{-1}(\tau) &  \text{if\ }  \tau \in \left(l_{\varphi},L_{\varphi} \right), \\
\infty &  \text{if\ }  \tau \geq L_\varphi. 
\end{cases} 
\]
Define a kind of differentiable coefficients of $\varphi$ by 
\begin{align*}
\theta_{\varphi}:=\sup_{s>0} \left\{\frac{s}{\varphi(s)} \cdot \varlimsup_{\varepsilon \downarrow 0} \frac{\varphi(s+\varepsilon)-\varphi(s)}{\varepsilon}\right\}, \quad
\delta_{\varphi}:=\inf_{s>0}\left\{\frac{s}{\varphi(s)} \cdot \varlimsup_{\varepsilon \downarrow 0} \frac{\varphi(s+\varepsilon)-\varphi(s)}{\varepsilon}\right\}.
\end{align*}
\begin{remark}
The case of $\varphi(s)=s$ is the crucial case,  where the $\varphi$-exponential function coincides with the usual exponential function and $\theta_\varphi=\delta_\varphi=1$. 
Another significant case is that $\varphi(s)=s^{q}$ for $q >0$ and $q \neq 1$.
In this case,  $\theta_\varphi=\delta_\varphi=q$ and  the $\varphi$-exponential function is  the power function given by  
\[
 \exp_q (\tau):= 
 [ 1+(1-q) \tau]_+^{1/{1-q}},
\]
where $0^{a}:=\infty$ for $a<0$. 
Since we have $\exp_q (\tau) \to \exp(\tau)$ as $q \to 1$,  we regard $\exp_1(\tau):=\exp(\tau)$ for the convenience.
\end{remark}
We recall the following lemma.
\begin{lemma} {\rm (\cite[Lemmas~2.10,\ 2.12,\ Propositions~2.13,\ 2.14]{OT})} \label{OT}
Suppose $\theta_\varphi<\infty$. \\
$(1)$ 
$s^{\theta_{\varphi}}/\varphi(s)$ $($resp.\ $s^{\delta_{\varphi}}/\varphi(s))$  is non-decreasing $($resp.\ non-increasing$)$ in $s \in (0,\infty)$.\\
$(2)$ 
We have $\exp_\varphi(r) \leq \exp_{\theta_\varphi}(\varphi(1)r)$ for any $r \in \R$.\\
$(3)$
$\delta_\varphi \geq 1 \Rightarrow l_\varphi=-\infty \Rightarrow \theta_\varphi \geq 1$ 
$($or equivalently $\theta_{\varphi}<1 \Rightarrow l_{\varphi}>-\infty \Rightarrow  \delta_{\varphi}<1)$.
\end{lemma}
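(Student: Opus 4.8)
The plan is to prove assertion~(1) first, by controlling the one-sided Dini derivatives of $\ln\varphi$, and then to deduce (2) and (3) from it through elementary estimates on the integral defining $\ln_\varphi$. Throughout, $D^+$ (resp.\ $D_+$) denotes the upper (resp.\ lower) right Dini derivative; the hypothesis $\theta_\varphi<\infty$ makes these finite on $(0,\infty)$. For (1) I would set $g(s):=\theta_\varphi\ln s-\ln\varphi(s)$ and $h(s):=\delta_\varphi\ln s-\ln\varphi(s)$, both continuous on $(0,\infty)$ because $\varphi$ is continuous and positive, so that it suffices to show $g$ is non-decreasing and $h$ is non-increasing. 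Applying the elementary inequalities $\ln y\le y-1$ and $\ln y\ge 1-1/y$ with $y=\varphi(s+\varepsilon)/\varphi(s)\ (\ge 1)$, and using $\varphi(s+\varepsilon)\to\varphi(s)$ (continuity) to pull $\varphi(s)$ out of the $\varlimsup$, one obtains
\[
s\,D_+(\ln\varphi)(s)\ \le\ \frac{s}{\varphi(s)}\varlimsup_{\varepsilon\downarrow0}\frac{\varphi(s+\varepsilon)-\varphi(s)}{\varepsilon}\ \le\ s\,D^+(\ln\varphi)(s)\qquad(s>0),
\]
whose middle term lies in $[\delta_\varphi,\theta_\varphi]$ by the very definition of $\delta_\varphi$ and $\theta_\varphi$. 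Since $\theta_\varphi\ln s$ and $\delta_\varphi\ln s$ are differentiable, it follows that $D^+g(s)=\theta_\varphi/s-D_+(\ln\varphi)(s)\ge0$ and $D_+h(s)=\delta_\varphi/s-D^+(\ln\varphi)(s)\le0$ for every $s>0$. The standard monotonicity principle for Dini derivatives---a continuous function whose upper right Dini derivative is everywhere nonnegative is non-decreasing, applied to $g$ and then to $-h$---yields that $g$ is non-decreasing and $h$ is non-increasing, i.e.\ $s^{\theta_\varphi}/\varphi(s)=e^{g(s)}$ is non-decreasing and $s^{\delta_\varphi}/\varphi(s)=e^{h(s)}$ is non-increasing.

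For (2), write $\ell(t):=\int_1^t s^{-\theta_\varphi}\,ds$ for the $\varphi$-logarithm of $s\mapsto s^{\theta_\varphi}$, so that $\exp_{\theta_\varphi}=\ell^{-1}$ on $(\lim_{t\downarrow0}\ell(t),\lim_{t\uparrow\infty}\ell(t))$, the extended map $\exp_{\theta_\varphi}$ is non-decreasing on $\R$, and $\exp_{\theta_\varphi}(\ell(u))=u$ for $u>0$. Assertion~(1) gives $1/\varphi(s)\ge\varphi(1)^{-1}s^{-\theta_\varphi}$ for $s\ge 1$ and $1/\varphi(s)\le\varphi(1)^{-1}s^{-\theta_\varphi}$ for $0<s\le 1$; integrating $\int_1^t$ and treating the cases $t\ge1$ and $0<t\le1$ separately, one gets $\varphi(1)\ln_\varphi(t)\ge\ell(t)$ for all $t>0$. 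Substituting $t=\exp_\varphi(r)$ with $r\in(l_\varphi,L_\varphi)$ gives $\varphi(1)r\ge\ell(\exp_\varphi(r))$, and applying $\exp_{\theta_\varphi}$ to both sides gives $\exp_{\theta_\varphi}(\varphi(1)r)\ge\exp_\varphi(r)$. The remaining values of $r$ are handled by inspection: for $r\le l_\varphi$ the right-hand side of the claimed inequality is $0$; and letting $t\uparrow\infty$ in $\varphi(1)\ln_\varphi(t)\ge\ell(t)$ gives $\varphi(1)L_\varphi\ge\lim_{t\uparrow\infty}\ell(t)$, so for $r\ge L_\varphi$ one has $\varphi(1)r\ge\lim_{t\uparrow\infty}\ell(t)$, whence $\exp_{\theta_\varphi}(\varphi(1)r)=\infty$.

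For (3), both implications reduce to integrability of $1/\varphi$ near $0$, again via (1). If $\delta_\varphi\ge1$, the monotonicity of $s^{\delta_\varphi}/\varphi(s)$ gives $1/\varphi(s)\ge\varphi(1)^{-1}s^{-\delta_\varphi}\ge\varphi(1)^{-1}s^{-1}$ for $0<s\le1$, hence $\ln_\varphi(t)=-\int_t^1 ds/\varphi(s)\to-\infty$ as $t\downarrow0$, i.e.\ $l_\varphi=-\infty$. The implication $l_\varphi=-\infty\Rightarrow\theta_\varphi\ge1$ I would prove by contraposition: if $\theta_\varphi<1$, the monotonicity of $s^{\theta_\varphi}/\varphi(s)$ gives $1/\varphi(s)\le\varphi(1)^{-1}s^{-\theta_\varphi}$ for $0<s\le1$, and $s^{-\theta_\varphi}$ is integrable near $0$ because $\theta_\varphi<1$, so $\int_0^1 ds/\varphi(s)<\infty$ and $l_\varphi>-\infty$. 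The parenthetical ``equivalently'' form is just the contrapositives of these two implications.

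The main obstacle is the fully rigorous execution of Step~(1) when $\varphi$ is only assumed continuous and non-decreasing rather than differentiable: one must argue throughout with one-sided Dini derivatives (a priori extended-real-valued, but finite here thanks to $\theta_\varphi<\infty$), keep careful track of $\varlimsup$ versus $\varliminf$ as the difference quotients pass through $\ln$, and invoke the correct version of the Dini-derivative monotonicity theorem, which genuinely uses the continuity of $g$ and $h$. Once (1) is secured, (2) and (3) are routine.
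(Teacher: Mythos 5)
Your proposal is correct. Note, however, that the paper itself offers no proof of this lemma: it is imported verbatim from \cite{OT} (Lemmas 2.10, 2.12, Propositions 2.13, 2.14), so there is nothing in this note to compare your argument against line by line; what you have written is a self-contained derivation of a quoted result. The logical skeleton --- establish the two monotonicity statements in (1) by showing that the logarithmic derivative of $\varphi$ is pinched between $\delta_\varphi/s$ and $\theta_\varphi/s$, then read off (2) by comparing $\ln_\varphi$ with $\ln_{\theta_\varphi}$ and (3) by testing integrability of $1/\varphi$ near $0$ --- is exactly the natural route, and each step checks out. Two small remarks. First, your chain $s\,D_+(\ln\varphi)(s)\le \frac{s}{\varphi(s)}\varlimsup_{\varepsilon\downarrow0}\frac{\varphi(s+\varepsilon)-\varphi(s)}{\varepsilon}\le s\,D^+(\ln\varphi)(s)$ is correct but slightly undersells the situation: since $\ln y\le y-1$ and $\ln y\ge 1-1/y$ both hold with ratio tending to $1$ as $y\to1$, the middle quantity is in fact \emph{equal} to $s\,D^+(\ln\varphi)(s)$; either way the conclusions $D^+g\ge0$ and $D_+h\le0$ follow, and the arithmetic $D^+g=\theta_\varphi/s-D_+(\ln\varphi)$ is legitimate because $0\le D_+(\ln\varphi)\le D^+(\ln\varphi)\le\theta_\varphi/s<\infty$ (monotonicity of $\varphi$ gives the lower bound), so no $\infty-\infty$ occurs. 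Second, in part (2) you correctly check the extended cases $r\le l_\varphi$ and $r\ge L_\varphi$, and the identity $\exp_{\theta_\varphi}(\ell(u))=u$ for $u>0$ is valid because $\ell=\ln_{\theta_\varphi}$ maps $(0,\infty)$ onto the open interval on which $\exp_{\theta_\varphi}$ is the genuine inverse; this matches the paper's convention $\exp_q(\tau)=[1+(1-q)\tau]_+^{1/(1-q)}$. The invocation of the Dini-derivative monotonicity theorem for continuous functions is the right tool given that $\varphi$ is only assumed continuous and non-decreasing, and continuity of $\ln\varphi$ is guaranteed by positivity and continuity of $\varphi$.
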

For $p>0$, let us consider the function of the form  $\phi_p(r):=\exp_\varphi ( -r^p/p)$ and set
\[
R_\phi:=\sup\{ r\ |\ r \in \supp( \phi_p) \}=(-p l_\varphi)^{1/p} \in (0,\infty].
\]
\begin{proposition} \label{prob}
$(1)$ $\phi_p$ satisfies $\lim_{r \downarrow 0} \phi_p(r)=1$ and~\eqref{condi1}, hence $\supp(\phi_p)$ is connected.
If moreover we assume either $l_\varphi>-\infty$ or $\theta_\varphi<(n+p)/n$ for $n \in \N$, then 
\[
\int_0^{-l_\varphi} \phi_p(r) r^{n-1} dr < \infty
\]
holds.
In this case, there exists a function $\rho$ satisfying~$\scal$ such that $\mu_n^{\phi_p}=\nu_n^{\rho}$.\\
$(2)$ Suppose that $\varphi$ is $C^1$ around $0$.\\  
$({\rm i})$ If $\theta_\varphi<1$, then $\phi_p$ satisfies~\eqref{condi3}, implying~$\lip{n}$.\\
$({\rm ii})$ If $1 \leq \delta_\varphi $,  $ 1< \theta_\varphi <(n+p)/n$ and $(\theta_\varphi-\delta_\varphi)(\theta_\varphi-1) \leq 1/p$, then~$\lip{n}$ does not hold.
\end{proposition}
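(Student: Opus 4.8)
The plan is to route both parts through Proposition~\ref{Lip}, whose standing hypothesis $\varlimsup_{r\downarrow 0}\phi_p(r)<\infty$ holds since part~(1) gives $\lim_{r\downarrow 0}\phi_p(r)=1$, and whose condition~\eqref{condi1} also holds by part~(1); so in each case $\lip{n}$ will reduce to deciding whether $\phi_p$ satisfies~\eqref{condi3}. The computation underlying both parts is the differentiation of $\ln_\varphi(\phi_p(r))=-r^p/p$: setting $y=y(r):=\phi_p(r)$ one gets $y'=-r^{p-1}\varphi(y)$, hence, for $\Phi:=-\ln\phi_p$ and $r$ with $y(r)$ in the interval around $0$ where $\varphi$ is $C^1$,
\[
\Phi'(r)=\frac{r^{p-1}\varphi(y)}{y},\qquad
\Phi''(r)=\frac{\varphi(y)}{y}\left[(p-1)r^{p-2}+r^{2p-2}\,\frac{\varphi(y)}{y}\left(1-\frac{y\varphi'(y)}{\varphi(y)}\right)\right],
\]
so that $\Phi''(r)/\Phi'(r)^2=\dfrac{p-1}{r^{p}\,\varphi(y)/y}+\left(1-\dfrac{y\varphi'(y)}{\varphi(y)}\right)$. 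Here $1-y\varphi'(y)/\varphi(y)\in[\,1-\theta_\varphi,\,1-\delta_\varphi\,]$, while Lemma~\ref{OT}(1) traps $\varphi(y)/y$ between constant multiples of $y^{\theta_\varphi-1}$ and $y^{\delta_\varphi-1}$ as $y\downarrow 0$.

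For~(i), the plan is to verify the hypotheses of Lemma~\ref{logcvx} for $f=\phi_p$. Since $\theta_\varphi<1$, Lemma~\ref{OT}(3) gives $l_\varphi>-\infty$, hence $R_\phi<\infty$, $\phi_p\in C^2$ and positive near $R_\phi$ with $\lim_{r\uparrow R_\phi}\phi_p(r)=0$, and also $\delta_\varphi<1$, so $1-y\varphi'(y)/\varphi(y)\in(0,1)$. Because $\varphi(y)/y^{\theta_\varphi}$ stays bounded below by a positive constant near $y=0$ and $\theta_\varphi-1<0$, one has $\varphi(y)/y\to\infty$ as $r\uparrow R_\phi$; combined with $r\uparrow R_\phi\in(0,\infty)$, the displayed formulas then give $\Phi'(r)\to\infty$, $\Phi''(r)\to\infty\in(0,\infty]$, and $\varlimsup_{r\uparrow R_\phi}\Phi''(r)/\Phi'(r)^2\le 1-\delta_\varphi<\infty$. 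Lemma~\ref{logcvx} then yields~\eqref{condi3}, and Proposition~\ref{Lip} (with~\eqref{condi1} and $\varlimsup_{r\downarrow 0}\phi_p(r)=1$) yields $\lip{n}$.

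For~(ii), I would first note that $1\le\delta_\varphi$ forces $l_\varphi=-\infty$ (Lemma~\ref{OT}(3)), so $R_\phi=\infty$, and $\theta_\varphi<(n+p)/n$ makes $\int_r^\infty\phi_p(s)s^{n-1}\,ds<\infty$ and $\mu_n^{\phi_p}=\nu_n^\rho$ with $\rho$ satisfying~\scal (part~(1)). By Proposition~\ref{Lip}, $\lip{n}$ is equivalent to~\eqref{condi3}; and, as in the proofs of Proposition~\ref{Lip} and Lemma~\ref{lip1}, $\lip{n}$ holds iff $\inf_{r>0}\sigma'(r)>0$, where~\eqref{condi1} already gives $\varliminf_{r\downarrow 0}\sigma'(r)>0$; so it is enough to produce $r_k\to\infty$ with $\sigma'(r_k)\to 0$. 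Writing $T(r):=\int_r^\infty\phi_p(s)s^{n-1}\,ds$, I would deduce from~\eqref{rell}, \eqref{def} and~\eqref{ineq2} that $\sigma(r)\to\infty$ and
\[
\sigma'(r)\ \asymp\ \frac{\phi_p(r)\,r^{n-1}}{\sigma(r)\,T(r)},\qquad \frac{\phi_p(r)\,r^{n-1}}{T(r)}=-(\ln T)'(r).
\]
Then, using $\phi_p(r)\le\exp_{\theta_\varphi}(-\varphi(1)r^p/p)\le C\,r^{-p/(\theta_\varphi-1)}$ from Lemma~\ref{OT}(2) and the matching lower bound $\phi_p(r)\ge\exp_{\delta_\varphi}(-\varphi(1)r^p/p)$ obtained by comparing $\ln_\varphi$ with $\ln_{\delta_\varphi}$ on $(0,1)$ via Lemma~\ref{OT}(1), one controls $T(r)$ and $-\ln T(r)$ from both sides. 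Averaging $-(\ln T)'$ over a dyadic block $(r,2r)$ produces a point $s_r\in(r,2r)$ with $\dfrac{\phi_p(s_r)\,s_r^{\,n-1}}{T(s_r)}\le\dfrac{\ln T(r)-\ln T(2r)}{r}$, and the bounds on $T$, together with $\theta_\varphi<(n+p)/n$ and the hypothesis $(\theta_\varphi-\delta_\varphi)(\theta_\varphi-1)\le 1/p$, should force the right-hand side divided by $\sigma(s_r)$ to tend to $0$; hence $\sigma'(s_r)\to 0$ as $r\to\infty$, so $\lip{n}$ does not hold.

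The hard part will be this last estimate in~(ii). The bounds on $\phi_p$ coming from Lemma~\ref{OT} are not sharp — they only locate $\phi_p$ between the rate $r^{-p/(\theta_\varphi-1)}$ and a heavier $\exp_{\delta_\varphi}$-type rate — so one must keep careful track of how fast $-\ln T(r)$ grows against $r$ in order to make $\phi_p(s_r)s_r^{\,n-1}/(\sigma(s_r)T(s_r))$ vanish, and it is exactly in balancing the gap $\theta_\varphi-\delta_\varphi$ against $\theta_\varphi-1$ and $p$ that the condition $(\theta_\varphi-\delta_\varphi)(\theta_\varphi-1)\le 1/p$ gets used. Part~(i), by comparison, is a direct verification of the hypotheses of Lemma~\ref{logcvx}.
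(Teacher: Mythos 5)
Your part (2)(i) is correct and is essentially the paper's own argument: the same formulas for $\Phi'$ and $\Phi''$, the same use of Lemma~\ref{OT} to get $\varphi(y)/y\to\infty$, $R_\phi<\infty$ and $\delta_\varphi\le y\varphi'(y)/\varphi(y)\le\theta_\varphi$, and the same appeal to Lemma~\ref{logcvx} followed by Proposition~\ref{Lip}. One smaller issue: part (1) is itself part of the statement, but you use it only as an input and never prove it; the integrability claim is not immediate --- it needs Lemma~\ref{OT}(2) and a case split ($\theta_\varphi=1$ gives a Gaussian-type integral, $\theta_\varphi>1$ a Beta-type integral finite exactly when $\theta_\varphi<(n+p)/n$) --- so it should be written out.

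The genuine gap is in (2)(ii), and you have flagged it yourself. Your target is the right one (it suffices to make $\sigma'$ small near $\infty$, since $\lip{n}$ forces $\inf\sigma'>0$ by Lemma~\ref{lip1}), but the averaging step cannot be closed with the information available. Since $\phi_p(s)s^{n-1}/T(s)=-(\ln T)'(s)\asymp\sigma'(s)\sigma(s)$, your mean-value point $s_r$ only satisfies $\sigma'(s_r)\lesssim\bigl(\sigma(2r)^2-\sigma(r)^2\bigr)/\bigl(r\sigma(r)\bigr)$, and this quantity is a nonzero constant whenever $\sigma(r)\asymp r$ --- e.g.\ in the Gaussian case $\varphi(s)=s$, $p=2$, where $\lip{n}$ \emph{does} hold --- so the argument can succeed only if one already knows $-\ln T(2r)+\ln T(r)=o\bigl(r\sqrt{-\ln T(r)}\bigr)$. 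The two-sided bounds from Lemma~\ref{OT} (when $\delta_\varphi=1$ they pin $-\ln T(r)$ only between $c\ln r$ and $Cr^p$) give instead a bound of order $r^{p-1}/\sqrt{\ln r}$, which diverges for $p>1$. The paper avoids averaging altogether and proves a pointwise inequality: setting $\Psi(r):=2\phi_p(r)r^{n-1}-\int_r^{R_\phi}\phi_p(s)s^{n-1}\,ds$, it shows $\Psi(r)\to0$ as $r\to\infty$ (using $\theta_\varphi<(n+p)/n$) and $\Psi'(r)=2\phi_p(r)r^{n-1}\bigl(-\varphi(\phi_p(r))r^{p-1}/\phi_p(r)+(n-1)/r+1/2\bigr)\ge0$ near $\infty$ --- this is exactly where the hypothesis on $\theta_\varphi,\delta_\varphi,p$ is consumed, via $\varphi(\phi_p(r))/\phi_p(r)\le cr^{p(\delta_\varphi-1)/(1-\theta_\varphi)}$, which keeps the negative term bounded. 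Hence $\int_r^\infty\phi_p(s)s^{n-1}\,ds\ge2\phi_p(r)r^{n-1}$ near $\infty$, which combined with \eqref{def}, \eqref{ineq2} and \eqref{rell} yields $\sigma'(r)\le1/\sigma(r)\to0$ for all large $r$. (Note also that the paper's proof actually uses the condition in the quotient form $(\theta_\varphi-\delta_\varphi)/(\theta_\varphi-1)\le1/p$ rather than the product appearing in the statement; your averaged route gives no indication of which form is operative, whereas the pointwise bound on $\Psi'$ makes this transparent.)
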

\begin{proof}
(1) 
Obviously the first claim holds  and the above integral is finite if $l_\varphi$ is finite.
In the case of $l_\varphi=-\infty$, Lemma~\ref{OT}(3)  implies $\theta_\varphi  \geq 1$ and we compute for $\theta_\varphi=1$ that 
\begin{align*}
\int_0^\infty \phi_p(r) r^{n-1} dr &\leq  \int_0^\infty \exp \left( -\varphi(1)\frac{r^p}{p} \right) r^{n-1} dr  < \infty
\end{align*}
and for $\theta_\varphi>1$ that  
\begin{align*}
\int_0^\infty \phi_p(r) r^{n-1} dr &\leq  \int_0^\infty \left\{ 1-\varphi(1)(1-\theta_\varphi )\frac{r^p}{p}  \right\}^{1/(1-\theta_\varphi )} r^{n-1} dr \\
&=\frac{1}{p}\left\{ \frac{p}{\varphi(1)(\theta_\varphi -1)}\right\}^{n/p}
   \mathrm{B\!}\left( \frac{n}{p}, \frac{1}{\theta_\varphi -1}-\frac{n}{p} \right) <\infty.
\end{align*}
$(2)$ We first remark that $\phi_p$ is $C^2$ on $(R,R_\phi)$ for some $R>0$.\\
(i)
Let us show that $\Phi:=-\ln \phi_p$ satisfies the conditions in Lemma~\ref{logcvx}.
We mention that Lemma~\ref{OT} with $\theta_\varphi<1$ yields 
$0 \leq \lim_{s \downarrow 0} s/{\varphi(s)} \leq \lim_{s \downarrow 0} {s^{1-\theta_\varphi}}/{\varphi(1) } = 0$, $R_\phi<\infty$
and by definition $\delta_\varphi \leq  \varphi'(s)s/\varphi(s)  \leq \theta_\varphi$ 
if $\varphi$ is differentiable at $s$.
We directly compute that 
\begin{align*}
\Phi'(r)
=\frac{\varphi(\phi_p(r))}{\phi_p(r)}r^{p-1},\quad
\Phi''(r)
=\Phi'(r)^2
\left(\frac{p-1}{r \Phi'(r)}+ 1-\frac{\varphi'(\phi_p(r)) \phi_p(r)}{\varphi(\phi_p(r))}\right),
\end{align*}
which implies   
\begin{align*}
&\lim_{r \uparrow  R_\phi}\Phi'(r) = \lim_{s \downarrow  0} \frac{\varphi(s)}{s} {R_\phi}^{p-1}=\infty,\\
&\lim_{r \uparrow  R_\phi}\Phi''(r)
\geq \lim_{r \uparrow  R_\phi}\Phi'(r)^2\left(\frac{p-1}{r \Phi'(r)}+ 1-\theta_\varphi\right)=\infty,\\
&\varlimsup_{r \uparrow  R_\phi}\frac{\Phi''(r)}{(\Phi'(r))^2} 
\leq \lim_{r \uparrow  R_\phi} \left( \frac{p-1}{r \Phi'(r)} + (1-\delta_\varphi) \right) =1-\delta_\varphi
\end{align*}
as desired.\\
\noindent
(ii)
Let $\sigma$ be the inverse function of $s_1^\rho$ and consider the function 
\[
\Psi(r):=2\phi_p(r)r^{n-1} - \int_r^{R_\phi} \phi_p(s)s^{n-1} ds.
\]
Note that Lemma~\ref{OT} yields $l_\varphi=-\infty$, that is, $R_\phi=\infty$, and 
\begin{align*}
\lim_{r \uparrow \infty}\Psi(r)
&\leq \lim_{r \uparrow \infty}2\left\{1-\varphi(1)(1-\theta_\varphi)\frac{r^p}{p}\right\}^{1/(1-\theta_\varphi)}r^{n-1}=0,
\end{align*}
where we use the  assumption $p/(\theta_\varphi-1) > n$.
If $\Psi$ is nonpositive around $\infty$, then we have 
\[
 \frac{2 M_n^f }{(2\pi)^{n/2}} e^{-\sigma(r)^2/2}\sigma(r)^{n-2} 
\geq
\frac{M_n^f }{(2\pi)^{n/2}}\int_{\sigma(r)}^{\infty} e^{-s^2/2}s^{n-1} ds
=\int_r^{\infty} \phi_p(s)s^{n-1} ds
\geq 2\phi_p(r)r^{n-1}
\]
around $\infty$, which provides 
\begin{align*}
\varliminf_{ r \uparrow \infty} \sigma'(r) 
&=\frac{(2\pi)^{n/2}}{M_n^f }\varliminf_{ r \uparrow \infty } 
\left[ \phi_p(r)\exp\left( \frac{\sigma(r)^2}{2}\right)   \left\{ \frac{r}{\sigma(r)}\right\}^{n-1} \right]
\leq \varliminf_{ r \uparrow \infty} \frac{1}{\sigma(r)}=0.
\end{align*}
Thus~$\lip{n}$ does not hold by Lemma~\ref{lip1}.

The rest is to prove the nonpositivity of $\Psi$ around $\infty$, which is equivalent to the nonnegativity of $\Psi'$ around $\infty$.
By Lemma~\ref{OT}, there exists $c>0$ depending on only $p$ and $\varphi$ such that 
\begin{align*}
\frac{\varphi(\phi_p(r))}{\phi_p(r)}
\leq \varphi(1) \phi_p(r)^{\delta_\varphi-1} 
\leq \varphi(1)\left\{1-\varphi(1)(1-\theta_\varphi)\frac{r^p}{p}\right\}^{(\delta_\varphi-1)/(1-\theta_\varphi)} 
\leq c r^{p(\delta_\varphi-1)/(1-\theta_\varphi)} 
\end{align*}
holds around $\infty$,   
which with the assumption $(\theta_\varphi-\delta_\varphi)/(\theta_\varphi-1) \leq 1/p$ implies 
\begin{align*}
\Psi'(r)
&=
2\phi_p(r)r^{n-1}
\left(-\frac{\varphi(\phi_p(r))}{\phi_p(r)}r^{p-1}+\frac{n-1}{r}+\frac12\right)  \\
&\geq 
2\phi_p(r)r^{n-1}
\left(-cr^{p(\theta_\varphi-\delta_\varphi)/(\theta_\varphi-1)-1} +\frac{n-1}{r}+\frac12\right)  
\geq 0.
\end{align*}$\qedd$
\end{proof}

\begin{remark}
Let $\rho$ be the function such that $\mu_n^{\phi_p}=\nu_n^\rho$.
In the case of $\theta_\varphi =\delta_\varphi >1$, 
Proposition~\ref{prob}(2-ii) implies that $\lip{n}$ does not holds for any $p>0$.
However in the case of $\theta_\varphi=\delta_\varphi=1$, that is $\phi_p(r)=\exp(-r^p/p)$,
$\lip{n}$ can or cannot hold depending on $p$.
Indeed, for $\psi(r):=r^{1-p}$,~\eqref{condic2} holds for any $\lambda \in(0,1)$ and $p>0$, but ~\eqref{condic1} holds only for $p \geq 2$ 
according to $\lim_{r  \uparrow \infty}  \{\psi(r)^2 \ln \left(  f(r) \psi(r) r^{n-1} \right)\}=-\lim_{r  \uparrow \infty}  r^{2-p}/p$.
Then by Lemma~\ref{necc}, $\lip{n}$ holds if and only if $p \geq 2$.\\
\end{remark}

\end{document}